\let\uml\"
\tikzset{middlearrow/.style={
        decoration={markings,
            mark= at position 0.5 with {\arrow[scale=1.25]{#1}} ,
        },
        postaction={decorate}
    }
}
\newcommand{\mcC}{\mathcal{C}}
\newcommand{\mcZ}{\mathcal{Z}}
\newcommand{\mfA}{\mathfrak{A}}
\newcommand{\mfS}{\mathfrak{S}}
\newcommand{\mbbC}{\mathbb{C}}
\newcommand{\mbbI}{\mathbb{I}}
\newcommand{\mbbN}{\mathbb{N}}
\newcommand{\mbbQ}{\mathbb{Q}}
\newcommand{\mbbR}{\mathbb{R}}
\newcommand{\mbbZ}{\mathbb{Z}}
\newcommand{\e}{\epsilon}
\newcommand{\g}{\gamma}
\newcommand{\n}{\nu}
\newcommand{\ps}{\psi}
\newcommand{\s}{\sigma}
\newcommand{\z}{\zeta}
\renewcommand{\(}{\left(}
\renewcommand{\[}{\left[}
\newcommand{\lcb}{\left\{}
\renewcommand{\)}{\right)}
\renewcommand{\]}{\right]}
\newcommand{\rcb}{\right\}}
\renewcommand{\bar}[1]{\overline{#1}}
\newcommand{\abs}[1]{\left|#1\right|}
\DeclareMathOperator{\FPdim}{FPdim}
\DeclareMathOperator{\End}{End}
\DeclareMathOperator{\Hom}{Hom}
\DeclareMathOperator{\Irr}{Irr}
\DeclareMathOperator*{\SL}{SL}
\DeclareMathOperator{\Fib}{Fib}
\DeclareMathOperator{\Rep}{Rep}
\DeclareMathOperator{\sVec}{sVec}
\DeclareMathOperator{\ord}{ord}
\DeclareMathOperator{\Tr}{Tr}
\numberwithin{equation}{section}
\newtheorem{theorem}{Theorem}[section]
\newtheorem*{thm*}{Theorem}
\newtheorem{corollary}[theorem]{Corollary}
\newtheorem{conjecture}[theorem]{Conjecture}
\newtheorem{prop}[theorem]{Proposition}
\newtheorem{proposition}[theorem]{Proposition}
\theoremstyle{definition}
\newtheorem{remark}[theorem]{Remark}
\newcommand{\thmref}[1]{Theorem $\ref{#1}$}
\newcommand{\conjref}[1]{Conjecture $\ref{#1}$}
\newcommand{\propref}[1]{Proposition $\ref{#1}$}
\newcommand{\corref}[1]{Corollary $\ref{#1}$}
  \newcommand{\addresseshere}{
  \enddoc@text\let\enddoc@text\relax
}
\renewcommand{\th}{\theta}
\renewcommand{\d}{\delta}
\renewcommand{\a}{\alpha}
\renewcommand{\b}{\beta}
\renewcommand{\l}{\lambda}
\renewcommand{\t}{\tau}
\begin{document}
\title[]{Rank 4 premodular categories}
\date{\today}
\author{Paul Bruillard\textsuperscript{$\dagger$}}
\email{Paul.Bruillard@pnnl.gov}
\dedicatory{
\ \\
\textsuperscript{$\dagger$}Pacific Northwest National Lab, 902 Battelle Boulevard,
Richland, Washington, U.S.A.\\
%
%
%
%
%
%
}
\thanks{\textit{PNNL Information Release:} PNNL-SA-111549}
\thanks{A portion of this paper was produced at a workshop at the American Institute of
Mathematics, whose support and hospitality are gratefully acknowledged.
The research described in this paper was, in part, conducted under the
Laboratory Directed Research and Development Program at PNNL, a multi-program
national laboratory operated by Battelle for the U.S. Department of Energy.  }

\maketitle

\begin{abstract} 
  We consider the classification problem for rank 4 premodular categories. We
  uncover a formula for the $2^{\text{nd}}$ Frobenius-Schur indicator of a
  premodular category, and complete the classification of rank 4 premodular
  categories (up to Grothendieck equivalence). 
\end{abstract}

\section{Introduction}
  \label{Section: Introduction}
  The theory of fusion categories is a natural generalization of representation
  theory-- not only of finite groups, but of Lie groups and Hopf algebras and
  so, in some sense, their classification began with the classification of
  groups and their representations. At the time of this writing, a complete
  classification has only been completed for rank 2 and 3 fusion categories
  \cite{O2,O6}. While the classification problem for fusion categories is largely believed to
  be intractable, several natural structures can be imposed on fusion categories
  to make them more amenable to study.

  One such structure is that of braiding. This gives rise to a kind of
  commutativity and indeed forces the underlying Grothendieck semiring to be
  commutative. On the other hand, one might expect that the two natural notions
  of dimension in the theory coincide, leading to pseudo-unitary fusion
  categories. If study is restricted to pseudo-unitary fusion categories, then
  it is known that the category is also spherical \cite{ENO1}. The appearance of
  a spherical structure is perhaps not surprising as there are no known examples
  of non-spherical fusion categories at this time.

  Even with the addition of these structures, a full classification is believed
  to be out of reach as it would include a classification of finite groups.
  However, these categories admit a stratification by degeneracy of the
  $S$--matrix into symmetric, properly premodular, and modular categories. The
  representation categories fall naturally in the symmetric case and in fact
  completely fill it out \cite{D1}. At the other end of the spectrum, a
  large amount of work has gone into understanding modular categories spurred by
  their relationship to rational conformal field theories, quantum computation,
  link invariants, and 3-manifold invariants \cite{W1}\cite{Tu1}\cite{BKi}.
  However, recently premodular categories have been shown to provide the
  algebraic underpinnings of $\(3+1\)$-dimensional topological quantum field
  theories and thereby govern topological insulators and some high--$T_{c}$
  superconductors \cite{WW1}. In addition to their innate uses, premodular
  categories give rise to modular categories through the double construction.

  Classification of premodular categories has been completed for rank 2 and 3
  \cite{O2}\cite{O4} and in this paper we extend the classification to rank 4.
  Since the techniques commonly applied in the modular setting do not apply in
  the premodular setting new tools are developed. Specifically, 
  the following formula for the $2^{\text{nd}}$ Frobenius-Schur indicator for a
  self-dual object is determined in terms of the premodular datum. 
  \begin{align*}
    \n_{2}\(X_{a}\)&=\frac{1}{D^{2}}\sum_{b,c}N_{bc}^{a}d_{b}d_{c}\(\frac{\th_{b}}{\th_{c}}\)^{2}-\th_{a}\sum_{\g\in\mcC'\setminus\mbbI}d_{\g}\Tr\(R_{\g}^{aa}\).
  \end{align*}

  We will begin by reviewing the theory of modular and premodular categories.
  Having dispensed with these preliminaries, a formula for the $2^{\text{nd}}$
  Frobenius-Schur indicator will be derived in the premodular setting. As an
  application of this indicator, the rank 4 premodular categories will then be
  classified. In conjunction with \cite{RSW}, this will complete the
  classification of rank 4 premodular and modular categories.
%

\section{Preliminaries}
  \label{Section: Preliminaries}
  A premodular category $\mcC$ is a braided, balanced, and fusion category.
  Furthermore, if the $S$-matrix is invertible then $\mcC$ is said to be
  modular. Every premodular category $\mcC$ is a ribbon category and as such
  enjoys a graphical calculus. A brief account of this calculus in addition to
  some salient algebraic relations will be given and further detail can be found
  in \cite{BKi}\cite{K1}\cite{Tu1}.

  \subsection{Pivotal structure and dimensions}
    By virtue of being a fusion category, $\mcC$ is semisimple and we will denote
    the isomorphism classes of the simple objects by $\mbbI=X_{0},\ldots, X_{n-1}$
    where $n$ is known as the \textit{rank} of $\mcC$. Furthermore, $\mcC$ is
    balanced and hence pivotal. This structure manifests itself through a duality
    $*$ acting by $X_{a}^{*}=X_{a^{*}}$. Such a duality induces an involution on
    the labeling set for the simple objects and can be encoded by the
    \textit{charge conjugation matrix} $C_{ab}=\d_{ab^{*}}$. Graphically, a
    nontrivial simple object $X_{a}$ is denoted by an upward arrow and its dual by
    a downward arrow,
    \begin{align}
      \vcenter{\hbox{
      \begin{tikzpicture}[xscale=0.23,yscale=0.23]
        \path[draw=black, middlearrow={stealth reversed}] (0,2)--(0,-2);
        \node at (0,-3) {\scalebox{0.75}{$a$}};
      \end{tikzpicture}
      }}
      \quad
      \vcenter{\hbox{
      \begin{tikzpicture}[xscale=0.23,yscale=0.23]
        \path[draw=black, middlearrow={stealth}] (0,2) -- (0,-2);
        \node at (0,-3) {\scalebox{0.75}{$a$}};
      \end{tikzpicture}
      }}
    \end{align}

    For the trivial object, $X_{0}=\mbbI$, no arrow is drawn. Note that for a
    self-dual object the arrow may be safely omitted. The pivotal structure of
    $\mcC$ further provides a collection of evaluation and co-evaluation maps
    \begin{equation}
      \label{Evaluation and Coevaluation}
      \begin{split}
        &ev_{X}:X^{*}\otimes X\to \mbbI\\
        &coev_{X}:\mbbI\to X\otimes X^{*}
      \end{split}
    \end{equation}

    These maps are given by the cup and cap
    \begin{equation}
      \label{Graphical Evaluation and Coevaluation}
      coev=\vcenter{\hbox{
      \begin{tikzpicture}[xscale=0.23,yscale=0.23]
        \path[draw=black, middlearrow={stealth reversed}] (0,0) arc (180:360:1 and 1);
      \end{tikzpicture}
      }}
      \quad
      ev=\vcenter{\hbox{
      \begin{tikzpicture}[xscale=0.23,yscale=0.23]
        \path[draw=black, middlearrow={stealth}] (0,0) arc (0:180:1 and 1);
      \end{tikzpicture}
      }}
    \end{equation}

    Compatibility of such maps give rise to the allowed graphical moves:
    \begin{equation}
      \label{Evaluation Coevaluation moves}
      \vcenter{\hbox{
      \begin{tikzpicture}[xscale=0.23,yscale=0.23]
        \path[draw=black] (0,0) arc (180:360:1 and 1);
        \path[draw=black] (4,0) arc (0:180:1 and 1);
        \path[draw=black, middlearrow={stealth}] (0,0) -- (0,2);
        \path[draw=black, middlearrow={stealth reversed}] (4,0) -- (4,-2);
      \end{tikzpicture}
      }}=\vcenter{\hbox{
      \begin{tikzpicture}[xscale=0.23,yscale=0.23]
        \path[draw=black] (2,0) arc (0:180:1 and 1);
        \path[draw=black] (2,0) arc (180:360:1 and 1);
        \path[draw=black, middlearrow={stealth reversed}] (0,0) -- (0,-2);
        \path[draw=black, middlearrow={stealth}] (4,0) -- (4,2);
      \end{tikzpicture}
      }}=\vcenter{\hbox{
      \begin{tikzpicture}[xscale=0.23,yscale=0.23]
        \path[draw=black, middlearrow={stealth reversed}] (0,2)--(0,-2);
      \end{tikzpicture}
      }}
    \end{equation}

    A pivotal category also comes equipped with a family of natural isomorphisms
    $j_{X}:X\to X^{**}$. The presence of these maps give rise to two canonical
    traces called left and right pivotal traces \cite{NS2}. In a spherical
    category, these traces coincide and so, for $f\in\End_{\mcC}\(X\)$, one simply
    writes $\Tr_{\mcC}\(f\)$. By the coherence theorems, it is known that every
    premodular category is equivalent to a strict premodular category and so we
    will, without loss of generality, restrict our attention to strict categories.
    One benefit of focusing on strict categories is that the isomorphisms $j_{X}$
    can be removed, which greatly simplifies the graphical calculus. For instance,
    taking the trace of $id_{X_{a}}$ allows one to define the \textit{dimension} of
    $X_{a}$ and the \textit{global dimension}, $D^{2}$. These dimensions are
    graphically given by
    \begin{align}
    \label{GraphicalDimensions}
    \dim\(X_{a}\)=d_{a}=
    \vcenter{\hbox{
    \begin{tikzpicture}[xscale=0.23,yscale=0.23]
      \path[draw=black, middlearrow={stealth reversed}] (0,0) arc (0:360:1 and 1);
      \node at (-2.5,0.75) {\scalebox{0.75}{$a$}};
    \end{tikzpicture}
    }},
    \quad
    D^{2}=\dim\(\mcC\)=
    \vcenter{\hbox{
    \begin{tikzpicture}[xscale=0.23,yscale=0.23]
      \path[draw=black] (0,0) arc (0:360:1 and 1);
    \end{tikzpicture}
    }}
    :=\displaystyle{\sum_{b\in\Irr\(\mcC\)}}d_{b}
    \vcenter{\hbox{
    \begin{tikzpicture}[xscale=0.23,yscale=0.23]
      \path[draw=black, middlearrow={stealth reversed}] (0,0) arc (0:360:1 and 1);
      \node at (-2.5,0.75) {\scalebox{0.75}{$b$}};
    \end{tikzpicture}
    }}
    \end{align}

  \subsection{Fusion and splitting spaces}
    $\mbbC$-linearity of $\mcC$ endows $\Hom_{\mcC}\(V,W\)$ with the structure of
    a complex vector space for all objects $V$ and $W$ in $\mcC$. However, certain
    families of Hom-spaces are distinguished due to semisimplicity, they are the
    \textit{fusion spaces} $V_{ab}^{c}=\Hom_{\mcC}\(X_{a}\otimes X_{b},X_{c}\)$
    and the \textit{splitting spaces} $V^{ab}_{c}=\Hom_{\mcC}\(X_{c},X_{a}\otimes
    X_{b}\)$. In the course of this work a basis of the splitting space will be
    denoted by$\lcb \ps_{c,i}^{ab}\rcb$ and the dual basis of the fusion space is
    given by $\lcb \ps_{ab,j}^{c}=\(\ps_{c,j}^{ab}\)^{\dagger}\rcb$. These bases
    are graphically depicted by
    \begin{equation}
      \label{Graphical Fusion/Splitting Bases}
      \vcenter{\hbox{\begin{tikzpicture}[xscale=0.23,y=6.6]
        \path[draw=black] (0,0) arc (180:360:1 and 1);
        \path[draw=black, middlearrow={stealth}] (0,0)--(0,1.5);
        \path[draw=black, middlearrow={stealth}] (2,0)--(2,1.5);
        \path[draw=black, middlearrow={stealth reversed}] (1,-1)--(1,-3.5);
        \node at (-0.5,1) {\scalebox{0.75}{$a$}};
        \node at (2.5,1) {\scalebox{0.75}{$b$}};
        \node at (1.5,-3.5) {\scalebox{0.75}{$c$}};
        \node[draw=black, fill=white, shape=circle, inner sep=0.5pt, minimum width=10pt] at (1,-1) {\scalebox{0.75}{$i$}};
      \end{tikzpicture}
      }
      }
      \quad\text{ and }\quad
      \vcenter{\hbox{
      \begin{tikzpicture}[xscale=0.23,yscale=0.23]
        \path[draw=black] (0,0) arc (-180:-360:1 and 1);
        \path[draw=black, middlearrow={stealth reversed}] (0,0)--(0,-1.5);
        \path[draw=black, middlearrow={stealth reversed}] (2,0)--(2,-1.5);
        \path[draw=black, middlearrow={stealth}] (1,1)--(1,3.5);
        \node at (-0.5,-1) {\scalebox{0.75}{$a$}};
        \node at (2.5,-1) {\scalebox{0.75}{$b$}};
        \node at (1.5,3.25) {\scalebox{0.75}{$c$}};
        \node[draw=black, fill=white, shape=circle, inner sep=0.5pt, minimum width=10pt] at (1,1) {\scalebox{0.75}{$j$}};
      \end{tikzpicture}
      }
      }
    \end{equation}

    respectively. The normalization of these bases will always be such that
    \begin{equation}
      \label{Fusion/Splitting Normalization}
      \th\(a,b,c\)\d_{ij}=
        \vcenter{\hbox{\begin{tikzpicture}[xscale=0.23,yscale=0.23]
        \path[draw=black] (0,0) arc (180:360:1 and 1);
        \path[draw=black] (0,0) -- (0,1);
        \path[draw=black] (2,0) -- (2,1);
        \path[draw=black] (4,1) arc (0:180:1 and 1);
        \path[draw=black] (0,1) arc (-180:-360:3 and 3);
        \path[draw=black,middlearrow={stealth reversed}] (1,-1) -- (1,-4);
        \path[draw=black] (0,-5) arc (-180:-360:1 and 1);
        \path[draw=black] (0,-5) -- (0,-6);
        \path[draw=black] (2,-5) -- (2,-6);
        \path[draw=black] (2,-6) arc (180:360:1 and 1);
        \path[draw=black] (0,-6) arc (180:360:3 and 3);
        \path[draw=black,middlearrow={stealth}] (4,1) -- (4,-6);
        \path[draw=black,middlearrow={stealth}] (6,1) -- (6,-6);
        \node at (1.5,-2) {\scalebox{0.75}{$c$}};
        \node at (-0.5, 0) {\scalebox{0.75}{$a$}};
        \node at (2.5,0) {\scalebox{0.75}{$b$}};
        \node[draw=black, fill=white, shape=circle, inner sep=0.5pt, minimum width=10pt] at (1,-1) {\scalebox{0.75}{$i$}};
        \node[draw=black, fill=white, shape=circle, inner sep=0.5pt, minimum width=10pt] at (1,-4) {\scalebox{0.75}{$j$}};
      \end{tikzpicture}
      }
      }
    \end{equation}

    where $\th\(a,b,c\)=\sqrt{d_{a}d_{b}d_{c}}$ is the \textit{theta symbol}.
    Further note that this normalization is consistent with the graphical
    dimensions given in equation \eqref{GraphicalDimensions}, \textit{i.e.}, $b=a^{*}$ and
    $c=0$.  This particular symbol appears in the decomposition of $id_{X_{a}\otimes X_{a}}$ as
    \begin{equation}
      \label{SplitThenFuse}
      \vcenter{\hbox{
      \begin{tikzpicture}[xscale=0.23,yscale=0.23]
        \path[draw=black, middlearrow={stealth}] (0,-6)--(0,1);
        \path[draw=black, middlearrow={stealth}] (2,-6)--(2,1);
        \node at (-0.5,-3.5) {\scalebox{0.75}{$a$}};
        \node at (2.5,-3.5) {\scalebox{0.75}{$b$}};
      \end{tikzpicture}
      }}=\sum_{c\in \Irr\mcC}\sum_{i\in V_{c}^{ab}}\sum_{j\in V_{ab}^{c}}\frac{d_{c}}{\th\(a,b,c\)}
      \vcenter{\hbox{
      \begin{tikzpicture}[xscale=0.23,yscale=0.23]
        \path[draw=black] (0,0) arc (180:360:1 and 1);
        \path[draw=black, middlearrow={stealth}] (0,0)--(0,1);
        \path[draw=black, middlearrow={stealth}] (2,0)--(2,1);
        \path[draw=black, middlearrow={stealth reversed}] (1,-1)--(1,-4);
        \path[draw=black] (2,-5) arc (0:180:1 and 1);
        \path[draw=black, middlearrow={stealth reversed}] (2,-5) -- (2,-6);
        \path[draw=black, middlearrow={stealth reversed}] (0,-5)--(0,-6);
        \node at (-0.5,0.5) {\scalebox{0.75}{$a$}};
        \node at (2.5,0.5) {\scalebox{0.75}{$b$}};
        \node[draw=black, fill=white, shape=circle, inner sep=0.5pt, minimum width=10pt] at (1,-1) {\scalebox{0.75}{$i$}};
        \node[draw=black, fill=white, shape=circle, inner sep=0.5pt, minimum width=10pt] at (1,-4) {\scalebox{0.75}{$j$}};
        \node at (-0.5,-6) {\scalebox{0.75}{$a$}};
        \node at (2.5,-6) {\scalebox{0.75}{$b$}};
        \node at (1.5,-2) {\scalebox{0.75}{$c$}};
      \end{tikzpicture}
      }}
    \end{equation}

    The dimension of the fusion space $\Hom_{\mcC}\(X_{a}\otimes X_{b},X_{c}\)$,
    $N_{ab}^{c}$, gives the multiplicity of $X_{c}$ appearing in $X_{a}\otimes
    X_{b}$, and is called a \textit{fusion coefficient}. The fusion coefficients
    are generally collected into \textit{fusion matrices}
    $\(N_{a}\)_{bc}=N_{ab}^{c}$ and furnish a representation of the Grothendieck
    semiring $Gr\(\mcC\)$ \cite{HR1}. Since the fusion coefficients are nonnegative
    integers, the Frobenius-Perron Theorem can be applied to deduce the existence
    of a largest eigenvalue of $N_{a}$, such an eigenvalue is called the
    \textit{Frobenius-Perron dimension} or \textit{FP-dimension} of $X_{a}$ and is
    denoted $\FPdim\(X_{a}\)$. One says that a premodular category is
    \textit{pseudo-unitary} if $\FPdim\(X_{a}\)=\dim\(X_{a}\)$ for all $a$. The
    \textit{global FP-dimension} of the category is defined by
    $\FPdim\(\mcC\)=\sum_{a}\FPdim\(X_{a}\)^{2}$. If the global FP-dimension
    is an integer, the category is said to be \textit{weakly integral} and if
    $\FPdim\(X_{a}\)\in\mbbZ$ for all $a$ then one says $\mcC$ is
    \textit{integral}. Finally, duality and braiding endow the fusion matrices
    with the following symmetries \cite{BKi}:
    \begin{equation}
      \label{FusionSymmetries}
      \begin{split}
        N_{ab}^{c}=N_{ba}^{c}&=N_{ac^{*}}^{b^{*}}=N_{a^{*}b^{*}}^{c^{*}}\\
        N_{ab^{*}}^{0}=1,\quad N_{a^{*}}&=N_{a}^{T},\quad N_{a}N_{b}=N_{b}N_{a}.
      \end{split}
    \end{equation}

  \subsection{Spherical structure}
    The braiding and spherical structure give rise to canonical elements $\th_{a}\in
    \End_{\mcC}\(X_{a}\)$ called \textit{twists}. Since $\End_{\mcC}\(X_{a}\)$ is
    one dimensional, the twists are scalar multiples of the identity, also denoted
    $\th_{a}$. Graphically, we have
    \begin{align*}
      \th_{a}
      \vcenter{\hbox{
      \begin{tikzpicture}[xscale=0.23,yscale=0.23]
        \path[draw=black, middlearrow={stealth reversed}] (0,1.5) -- (0,-2.5);
        \node at (0,-3) {\scalebox{0.75}{$a$}};
      \end{tikzpicture}
      }}=
      \vcenter{\hbox{
      \begin{tikzpicture}[xscale=0.23,yscale=0.23]
        \path[draw=black, middlearrow={stealth}] (-1,-2.5) -- (-1,-1);
        \path[draw=black] (0,0) arc (90:180:1 and 1);
        \path[draw=black] (0,-1) arc (-90:90:0.5 and 0.5);
        \path[draw=black] (0,-1) arc (-90:-130:1 and 1);
        \path[draw=black] (-1,0) arc (180:200:1 and 1);
        \path[draw=black] (-1,0) -- (-1,1.5);
        \node at (-1,-3) {\scalebox{0.75}{$a$}};
      \end{tikzpicture}
      }}
    \end{align*}

    The celebrated Vafa Theorem tells us that these twists are roots of
    unity\cite{Va1}. For convenience, the twists are collected into the diagonal
    matrix $T_{ab}=\d_{ab}\th_{b}$ called the \textit{T-matrix}.

  \subsection{Braiding}
    The braiding in $\mcC$ is given by elements\ \\
    $R_{ab}\in\Hom_{\mcC}\(X_{a}\otimes X_{b},X_{b}\otimes X_{a}\)$. Coupling
    these maps with the splitting spaces, one can define the $R$-matrices
    $\(R_{c}\)_{ab}=R^{ab}_{c}$, where $R_{c}^{ab}$ is obtained by ``braiding
    $X_{a}$ with $X_{b}$ in the $X_{c}$ channel.'' In fact, the bases of the
    splitting space $V_{c}^{ab}$ can be chosen to diagonalize $R_{c}^{ab}$ by
    $R_{c}^{ab}\ps_{c,i}^{ab}=R_{c,i}^{ab}\ps_{c,i}^{ab}$ \cite{K1}.
    Pictorially, this is given by
    \begin{align*}
      R_{c,i}^{ab}
      \vcenter{\hbox{
      \begin{tikzpicture}[xscale=0.23,yscale=0.23]
        \path[draw=black] (0,0) arc (180:360:1 and 1);
        \path[draw=black, middlearrow={stealth}] (0,0)--(0,4);
        \path[draw=black, middlearrow={stealth}] (2,0)--(2,4);
        \path[draw=black, middlearrow={stealth reversed}] (1,-1)--(1,-4);
        \node at (-0.5,3.5) {\scalebox{0.75}{$a$}};
        \node at (2.5,3.5) {\scalebox{0.75}{$b$}};
        \node at (1.5,-3.5) {\scalebox{0.75}{$c$}};
        \node[draw=black, fill=white, shape=circle, inner sep=0.5pt, minimum width=10pt] at (1,-1) {\scalebox{0.75}{$i$}};
      \end{tikzpicture}
      }}
      =
      \vcenter{\hbox{
      \begin{tikzpicture}[xscale=0.23,yscale=0.23]
        \path[draw=black] (-2,0) arc (180:360:1 and 1);
        \path[draw=black] (-2,0)--(-2,1);
        \path[draw=black] (0,0)--(0,1);
        \path[draw=black, middlearrow={stealth reversed}] (-1,-1)--(-1,-4);
        \path[draw=black] (-2,1) arc (-180:-240:2 and 1.25);
        \path[draw=black] (-1,2.08253) arc (300:360:2 and 1.25);
        \path[draw=black] (0,1) arc (0:50:2 and 1.25);
        \path[draw=black] (-2,3.16506) arc (180:230:2 and 1.25);
        \path[draw=black, middlearrow={stealth}] (-2,3.16506) -- (-2,4);
        \path[draw=black, middlearrow={stealth}] (0,3.16506) -- (0,4);
        \node at (-2.5,3.5) {\scalebox{0.75}{$a$}};
        \node at (0.5,3.5) {\scalebox{0.75}{$b$}};
        \node at (-0.5,-3.5) {\scalebox{0.75}{$c$}};
        \node[draw=black, fill=white, shape=circle, inner sep=0.5pt, minimum width=10pt] at (-1,-1) {\scalebox{0.75}{$i$}};
      \end{tikzpicture}
      }}
    \end{align*}

    These braidings give rise to a family of natural isomorphisms
    $c_{ab}=R_{ba}R_{ab}$ in $\End_{\mcC}\(X_{a}\otimes X_{b}\)$ which can be
    traced to define the \textit{S}-matrix 
    \begin{equation}
      \label{SMatrixDefinition}
      \tilde{s}_{ab}=\Tr_{\mcC}\(c_{a\bar{b}}\)=
      \vcenter{\hbox{
      \begin{tikzpicture}[xscale=0.23,yscale=0.23]
        \path[draw=black, middlearrow={stealth}] ({2*cos(50)},{2*sin(50)}) arc (50:390:2 and 2);
        \path[draw=black, middlearrow={stealth}] ({2*cos(235)+3},{2*sin(235)}) arc (235:575:2 and 2);
        \node at (0.5,0) {\scalebox{0.75}{$\bar{b}$}};
        \node at (2.5,0) {\scalebox{0.75}{$a$}};
      \end{tikzpicture}}}
    \end{equation}

  \subsection{Algebraic identities}
    The $S$-matrix is highly symmetric and, in fact we have
    \begin{align}
      \label{SMatrixSymmetries}
      \tilde{s}_{a^{*}b}^{*}=\tilde{s}_{ab}=\tilde{s}_{ba}=\tilde{s}_{a^{*}b^{*}},\quad \tilde{s}_{a0}=d_{a}.
    \end{align}

    In the course of this work the tuple
    $\(\tilde{S},\;T,\;N_{0},\;\ldots,\;N_{n}\)$ will be referred to as premodular
    datum. Perhaps not surprisingly, the matrices comprising premodular datum are
    strongly related. For instance, an elementary application of the graphical
    calculus leads to the \textit{balancing relation} \cite{BKi}
    \begin{equation}
      \label{Balancing}
      \tilde{s}_{ab}=\th_{a}^{-1}\th_{b}^{-1}\displaystyle{\sum_{c}}N_{a^{*}b}^{c}\th_{c}d_{c}.
    \end{equation}

    Additionally, one can show that the columns of $S$--matrix are eigenvectors of
    the fusion matrices. In a modular category, this leads to the well-known
    Verlinde Formula, while in the premodular setting it is shown in \cite{M4}
    that
    \begin{equation}
      \label{PreVerlinde}
      \tilde{s}_{ab}\tilde{s}_{ac}=d_{a}\displaystyle{\sum_{\ell}}N_{bc}^{\ell}\tilde{s}_{a\ell}.
    \end{equation}

    It can further be shown that the $S$-- and $T$--matrices are related by 
    \begin{equation}
      \label{STCubed and STInvCubed}
      \begin{split}
        \(\tilde{S}T\)^{3}&=p^{+}\tilde{S}^{2},\\
        \(\tilde{S}T^{-1}\)^{3}&=p^{-}\tilde{S}^{2}C.
      \end{split}
    \end{equation}

    where $C_{a,b}=\d_{a,b^{*}}$ is the \textit{charge conjugation} matrix, and $p^{\pm}$ are the \textit{Gauss sums}:
    \begin{equation}
      p^{\pm}=\displaystyle{\sum_{a}}\th_{a}^{\pm}d_{a}^{2}.
    \end{equation}

    If $\det\(\tilde{S}\)\neq0$ then $\mcC$ is said to be modular and the
    additional identities
    \begin{equation}
      \label{SSdagger}
      \tilde{S}\tilde{S}^{\dagger}=D^{2}\mbbI\quad\text{ and }\quad p^{+}p^{-}=D^{2},
    \end{equation}

    are acquired, from which it is clear that $\tilde{S}$ and $T$ furnish a
    projective representation of the modular group $\SL\(2,\mbbZ\)$.

    $\mcC$ is said to be \textit{symmetric} if $\tilde{s}_{ab}=d_{a}d_{b}$ for all
    $a$ and $b$. One can view symmetric categories as completely degenerate
    premodular categories while modular categories are completely nondegenerate.
    It is between these two extremes that we will be focusing our attention and so
    we define a \textit{properly premodular category} $\mcC$ to be a premodular
    category that is neither symmetric nor modular. In this way, symmetric,
    properly premodular, and modular categories partition the class of premodular
    categories.

  \subsection{The M\"{u}ger center and finiteness}
    The braiding can be used to define the \textit{M\"{u}ger center} of a
    premodular category by \cite{M4}
    \begin{align}
      \label{MugerCenterDefinition}
      \mcC'=\lcb X\in\mcC\mid c_{X,Y}=id_{X\otimes Y},\;\forall Y\in\mcC\rcb.
    \end{align}

    The elements of the center are often called \textit{central} or
    \textit{transparent} \cite{M4}\cite{Brug1}.\footnote{In the course
    of this work, simple objects in the M\"{u}ger center will be indexed by
    Greek letters to distinguish them from simple objects in $\mcC$ which will be
    indexed by lower case Latin letters.} This center constitutes a full symmetric
    ribbon subcategory of $\mcC$ which is trivial if and only if $\mcC$ is modular.
    In fact, if $\mcC$ is not modular then some column of the $S$--matrix is a
    multiple of the first \cite{Brug1}. Thus a premodular category $\mcC$ is
    symmetric if $\mcC=\mcC'$, $\mcC$ is modular if $\mcC'=\lcb\mbbI\rcb$, and
    $\mcC$ is properly premodular otherwise.

    Given these abstract constructions one might wonder if premodular categories
    exist and indeed they do; for instance, quantum groups lead not only to
    modular, but also to properly premodular categories \cite{R1}. Given
    their existence, a classification program has been taken up. In \cite{O2},
    \cite{O6}, and \cite{O4}, Ostrik has classified all fusion categories of
    ranks 2 and 3 and all premodular categories of rank 3. However, at the time
    of this writing it is not known if there are finitely many premodular
    categories up to equivalence.

    Such a problem is referred to as a \textit{rank finiteness problem}. In
    \cite{RSW} the rank finiteness problem was posed for modular categories
    while in \cite{O2} it was posed for fusion categories. Over the years
    progress has been made in various directions. For instance, direct
    classification of (pre)modular categories demonstrate the conjecture in low
    rank, while \cite{ENO1} showed rank finiteness for bounded FP-dimension and
    weakly integral categories. In a recent paper, \cite{BNRW1}, the rank
    finiteness problem was solved for modular categories. The proof for modular
    categories demonstrated connections between number theory and
    modular categories and heavily relied on the Frobenius-Schur indicators via
    the Cauchy Theorem for Modular Categories. In this paper we will extend the
    rank 4 premodular classification which depends strongly on Frobenius-Schur
    indicators. This suggests that they are fundamental to the theory of
    premodular categories.

\section{Frobenius-Schur indicators}
  \label{Section: Frobenius-Schur Indicators}
  As alluded to in the literature e.g. \cite{DGNO1}, the study of fusion
  categories is the correct generalization of the study of the representation
  theory of finite groups. Each finite group, $G$, gives rise to a fusion
  category whose objects are the representations of $G$ and whose morphisms are
  intertwiners \cite{DGNO1}. With this connection, it is natural to ask if the
  techniques used in the study of finite group representations can be
  generalized to arbitrary fusion categories and often they can. For instance,
  the class equation was generalized in \cite{ENO1}, a rigorous study of
  Frobenius-Schur indicators was undertaken in \cite{NS2}\cite{NS1}, and the
  Cauchy Theorem was fully extended to modular categories in \cite{BNRW1}.

  In the classical theory of the representations of finite groups one can form
  the $n^{\text{th}}$-Frobenius-Schur indicator from the characters for any
  $n\in\mbbN$. The $0^{\text{th}}$ Frobenius-Schur indicator gives the dimension
  of the representation, the $1^{\text{st}}$ indicator detects if the
  representation is the trivial representation. The $2^{\text{nd}}$ indicator of
  an irreducible representation is $1$, $0$, or $-1$ depending on if the
  representation is real, complex, or quaternionic. Frobenius-Schur indicators
  have also been developed for and applied to semisimple Hopf algebras
  \cite{LM,KSZ1}.

  The $2^{\text{nd}}$ Frobenius-Schur indicator in the context of fusion
  categories was first computed by physicists studying rational conformal field
  theories \cite{Ban1}. The study of Frobenius-Schur indicators was furthered
  by Siu-Hung Ng and Peter Schauenburg who applied the graphical calculus and
  categorical considerations to derive graphical expressions for the
  $n^{\text{th}}$ Frobenius-Schur indicators of pivotal, spherical, and modular
  categories. In the modular case, they recovered Bantay's result and found
  similar formulas for computing the $n^{\text{th}}$ indicator of a modular
  category in terms of the modular datum. If the modularity assumption is
  dropped it is not known how to compute the $n^{\text{th}}$ indicator strictly
  in terms of the premodular datum; that is without recourse to the graphical
  calculus. In this section, we will determine the following formula for the
  $2^{\text{nd}}$ Frobenius-Schur indicator of a premodular category: 
  \begin{align*}
    \n_{2}\(X_{a}\)&=\frac{1}{D^{2}}\sum_{b,c}N_{bc}^{a}d_{b}d_{c}\(\frac{\th_{b}}{\th_{c}}\)^{2}-\th_{a}\sum_{\g\in\mcC'\setminus\mbbI}d_{\g}\Tr\(R_{\g}^{aa}\).
  \end{align*}

  If the modularity condition is enforced, one sees that $\mcC'=\lcb\mbbI\rcb$
  and so the above formula recovers Bantay's result.

  Examination of Ng and Schauenburg's proof presented in \cite{NS2} reveals that
  modularity is only used indirectly when invoking \cite[Corollary 3.1.11]{BKi}.
  This corollary can be modified to give a starting place for computing the
  $2^{\text{nd}}$ indicator in the premodular setting.
  \begin{prop}
    \label{DoubleLoopRemoval}
    If $\mcC$ is premodular and $X_{a}$ is self-dual then
    \begin{equation*}
      \frac{d_{a}}{D^{2}}\vcenter{\hbox{
      \begin{tikzpicture}[xscale=0.23,yscale=0.23]
        \path[draw=black] ({2*cos(130)},{sin(130)}) arc (130:410:2 and 1);
        \path[draw=black, middlearrow={stealth}] ({2*cos(110)},{sin(110)-1.5}) --({2*cos(110)},5);
        \path[draw=black] ({2*cos(110)},{sin(110)-2.5}) --({2*cos(110)},-6);
        \path[draw=black, middlearrow={stealth}] ({2*cos(430)},{sin(430)-1.5}) --({2*cos(430)},5);
        \path[draw=black] ({2*cos(430)},{sin(430)-2.5}) --({2*cos(430)},-6);
        \node at ({2*cos(110)-0.5},{4.5}) {\scalebox{0.75}{$a$}};
        \node at ({2*cos(430)+0.5},{4.5}) {\scalebox{0.75}{$a$}};
      \end{tikzpicture}
      }}
      =
      \vcenter{\hbox{
      \begin{tikzpicture}[xscale=0.23,yscale=0.23]
        \path[draw=black] (-1,2) arc (180:360:1 and 1);
        \path[draw=black, middlearrow={stealth}] (-1,2) -- (-1,5);
        \path[draw=black, middlearrow={stealth}] (1,2) -- (1,5);
        \path[draw=black] (1,-3) arc (0:180:1 and 1);
        \path[draw=black, middlearrow={stealth}] (-1,-6) -- (-1,-3);
        \path[draw=black, middlearrow={stealth}] (1,-6) -- (1,-3);
        \node at (-1.5,4.5) {\scalebox{0.75}{$a$}};
        \node at (1.5,4.5) {\scalebox{0.75}{$a$}};
        \node at (-1.5,-5) {\scalebox{0.75}{$a$}};
        \node at (1.5,-5) {\scalebox{0.75}{$a$}};
      \end{tikzpicture}
      }}
      +\sum_{\g\in\mcC'\setminus\mbbI,i}\sqrt{d_{\g}}
      \vcenter{\hbox{
      \begin{tikzpicture}[xscale=0.23,yscale=0.23]
        \path[draw=black, middlearrow={stealth}] (0,-3)--(0,2);
        \path[draw=black] (-1,3) arc (180:360:1 and 1);
        \path[draw=black, middlearrow={stealth}] (-1,3) -- (-1,5);
        \path[draw=black, middlearrow={stealth}] (1,3) -- (1,5);
        \path[draw=black] (1,-4) arc (0:180:1 and 1);
        \path[draw=black, middlearrow={stealth}] (-1,-6) -- (-1,-4);
        \path[draw=black, middlearrow={stealth}] (1,-6) -- (1,-4);
        \node at (0.5,0) {\scalebox{0.75}{$\g$}};
        \node at (-1.5,4.5) {\scalebox{0.75}{$a$}};
        \node at (1.5,4.5) {\scalebox{0.75}{$a$}};
        \node at (-1.5,-5) {\scalebox{0.75}{$a$}};
        \node at (1.5,-5) {\scalebox{0.75}{$a$}};
        \node[draw=black, fill=white, shape=circle, inner sep=0.5pt, minimum width=10pt] at (0,2) {\scalebox{0.75}{$i$}};
        \node[draw=black, fill=white, shape=circle, inner sep=0.5pt, minimum width=10pt] at (0,-3) {\scalebox{0.75}{$j$}};
      \end{tikzpicture}
      }}
    \end{equation*}
  \end{prop}
  \begin{proof}
    Applying equation \eqref{SplitThenFuse} and \cite[Lemma 3.1.4]{BKi} we have
    \begin{align*}
      \vcenter{\hbox{
      \begin{tikzpicture}[xscale=0.23,yscale=0.23]
        \path[draw=black] ({2*cos(130)},{sin(130)}) arc (130:410:2 and 1);
        \path[draw=black, middlearrow={stealth}] ({2*cos(110)},{sin(110)-1.5}) --({2*cos(110)},5);
        \path[draw=black] ({2*cos(110)},{sin(110)-2.5}) --({2*cos(110)},-6);
        \path[draw=black, middlearrow={stealth}] ({2*cos(430)},{sin(430)-1.5}) --({2*cos(430)},5);
        \path[draw=black] ({2*cos(430)},{sin(430)-2.5}) --({2*cos(430)},-6);
        \node at ({2*cos(110)-0.5},{4.5}) {\scalebox{0.75}{$a$}};
        \node at ({2*cos(430)+0.5},{4.5}) {\scalebox{0.75}{$a$}};
      \end{tikzpicture}
      }}
      &=\sum_{b,c,i}\frac{d_{b}d_{c}}{\th\(a,a,c\)}
      \vcenter{\hbox{
      \begin{tikzpicture}[xscale=0.23,yscale=0.23]
        \path[draw=black, middlearrow={stealth reversed}] ({2*cos(110)},{sin(110)}) arc (110:430:2 and 1);
        \path[draw=black, middlearrow={stealth}] (0,-0.7)--(0,2);
        \path[draw=black] (0,-1.3)--(0,-3);
        \path[draw=black] (-1,3) arc (180:360:1 and 1);
        \path[draw=black, middlearrow={stealth}] (-1,3) -- (-1,5);
        \path[draw=black, middlearrow={stealth}] (1,3) -- (1,5);
        \path[draw=black] (1,-4) arc (0:180:1 and 1);
        \path[draw=black, middlearrow={stealth}] (-1,-6) -- (-1,-4);
        \path[draw=black, middlearrow={stealth}] (1,-6) -- (1,-4);
        \node at (0.5,0) {\scalebox{0.75}{$c$}};
        \node at (-2.5,0) {\scalebox{0.75}{$b$}};
        \node at (-1.5,4.5) {\scalebox{0.75}{$a$}};
        \node at (1.5,4.5) {\scalebox{0.75}{$a$}};
        \node at (-1.5,-5) {\scalebox{0.75}{$a$}};
        \node at (1.5,-5) {\scalebox{0.75}{$a$}};
        \node[draw=black, fill=white, shape=circle, inner sep=0.5pt, minimum width=10pt] at (0,2) {\scalebox{0.75}{$i$}};
        \node[draw=black, fill=white, shape=circle, inner sep=0.5pt, minimum width=10pt] at (0,-3) {\scalebox{0.75}{$j$}};
      \end{tikzpicture}
      }}
      =\sum_{b,c,i}\frac{d_{b}d_{c}}{\th\(a,a,c\)}\frac{\tilde{s}_{bc}}{d_{c}}
      \vcenter{\hbox{
      \begin{tikzpicture}[xscale=0.23,yscale=0.23]
        \path[draw=black, middlearrow={stealth}] (0,-3)--(0,2);
        \path[draw=black] (-1,3) arc (180:360:1 and 1);
        \path[draw=black, middlearrow={stealth}] (-1,3) -- (-1,5);
        \path[draw=black, middlearrow={stealth}] (1,3) -- (1,5);
        \path[draw=black] (1,-4) arc (0:180:1 and 1);
        \path[draw=black, middlearrow={stealth}] (-1,-6) -- (-1,-4);
        \path[draw=black, middlearrow={stealth}] (1,-6) -- (1,-4);
        \node at (0.5,0) {\scalebox{0.75}{$c$}};
        \node at (-1.5,4.5) {\scalebox{0.75}{$a$}};
        \node at (1.5,4.5) {\scalebox{0.75}{$a$}};
        \node at (-1.5,-5) {\scalebox{0.75}{$a$}};
        \node at (1.5,-5) {\scalebox{0.75}{$a$}};
        \node[draw=black, fill=white, shape=circle, inner sep=0.5pt, minimum width=10pt] at (0,2) {\scalebox{0.75}{$i$}};
        \node[draw=black, fill=white, shape=circle, inner sep=0.5pt, minimum width=10pt] at (0,-3) {\scalebox{0.75}{$j$}};
      \end{tikzpicture}
      }}\\
      &=\sum_{c,i}\frac{\(\tilde{s}^{2}\)_{0c}}{\th\(a,a,c\)}
      \vcenter{\hbox{
      \begin{tikzpicture}[xscale=0.23,yscale=0.23]
        \path[draw=black, middlearrow={stealth}] (0,-3)--(0,2);
        \path[draw=black] (-1,3) arc (180:360:1 and 1);
        \path[draw=black, middlearrow={stealth}] (-1,3) -- (-1,5);
        \path[draw=black, middlearrow={stealth}] (1,3) -- (1,5);
        \path[draw=black] (1,-4) arc (0:180:1 and 1);
        \path[draw=black, middlearrow={stealth}] (-1,-6) -- (-1,-4);
        \path[draw=black, middlearrow={stealth}] (1,-6) -- (1,-4);
        \node at (0.5,0) {\scalebox{0.75}{$c$}};
        \node at (-1.5,4.5) {\scalebox{0.75}{$a$}};
        \node at (1.5,4.5) {\scalebox{0.75}{$a$}};
        \node at (-1.5,-5) {\scalebox{0.75}{$a$}};
        \node at (1.5,-5) {\scalebox{0.75}{$a$}};
        \node[draw=black, fill=white, shape=circle, inner sep=0.5pt, minimum width=10pt] at (0,2) {\scalebox{0.75}{$i$}};
        \node[draw=black, fill=white, shape=circle, inner sep=0.5pt, minimum width=10pt] at (0,-3) {\scalebox{0.75}{$j$}};
      \end{tikzpicture}
      }}
      =\vcenter{\hbox{
      \begin{tikzpicture}[xscale=0.23,yscale=0.23]
        \path[draw=black] (-1,2) arc (180:360:1 and 1);
        \path[draw=black, middlearrow={stealth}] (-1,2) -- (-1,5);
        \path[draw=black, middlearrow={stealth}] (1,2) -- (1,5);
        \path[draw=black] (1,-3) arc (0:180:1 and 1);
        \path[draw=black, middlearrow={stealth}] (-1,-6) -- (-1,-3);
        \path[draw=black, middlearrow={stealth}] (1,-6) -- (1,-3);
        \node at (-1.5,4.5) {\scalebox{0.75}{$a$}};
        \node at (1.5,4.5) {\scalebox{0.75}{$a$}};
        \node at (-1.5,-5) {\scalebox{0.75}{$a$}};
        \node at (1.5,-5) {\scalebox{0.75}{$a$}};
      \end{tikzpicture}
      }}
      +\sum_{c\neq0,i}\frac{\(\tilde{s}^{2}\)_{0c}}{\th\(a,a,c\)}
      \vcenter{\hbox{
      \begin{tikzpicture}[xscale=0.23,yscale=0.23]
        \path[draw=black, middlearrow={stealth}] (0,-3)--(0,2);
        \path[draw=black] (-1,3) arc (180:360:1 and 1);
        \path[draw=black, middlearrow={stealth}] (-1,3) -- (-1,5);
        \path[draw=black, middlearrow={stealth}] (1,3) -- (1,5);
        \path[draw=black] (1,-4) arc (0:180:1 and 1);
        \path[draw=black, middlearrow={stealth}] (-1,-6) -- (-1,-4);
        \path[draw=black, middlearrow={stealth}] (1,-6) -- (1,-4);
        \node at (0.5,0) {\scalebox{0.75}{$c$}};
        \node at (-1.5,4.5) {\scalebox{0.75}{$a$}};
        \node at (1.5,4.5) {\scalebox{0.75}{$a$}};
        \node at (-1.5,-5) {\scalebox{0.75}{$a$}};
        \node at (1.5,-5) {\scalebox{0.75}{$a$}};
        \node[draw=black, fill=white, shape=circle, inner sep=0.5pt, minimum width=10pt] at (0,2) {\scalebox{0.75}{$i$}};
        \node[draw=black, fill=white, shape=circle, inner sep=0.5pt, minimum width=10pt] at (0,-3) {\scalebox{0.75}{$j$}};
      \end{tikzpicture}
      }}
    \end{align*}

    Since the columns of the columns of the $S$-matrix are eigenvectors of the
    fusion matrices we know that $\(\tilde{s}^{2}\)_{\g0}=d_{\g}D^{2}$ if
    $X_{\g}\in\mcC'$ and $0$ otherwise; this observation gives the desired result.
  \end{proof}

  Recall from \cite{NS2} that the $n^{\text{th}}$ Frobenius-Schur indicator is
  defined by $\n_{n}\(X\)=Tr\(E_{X}^{\(n\)}\)$, where $E_{X}^{\(n\)}$ is given
  by
  \begin{align*}
    E_{X}^{n}:
    \vcenter{\hbox{
    \begin{tikzpicture}[xscale=0.23,yscale=0.23]
      \path[draw=black] (-0.5,0) -- (-0.5,-5);
      \path[draw=black] (0.5,0) -- (0.5,-5);
      \path[draw=white] (3.5,2) arc (0:180:3 and 3);
      \node[draw=black, fill=white,shape=rectangle, minimum width=1cm, minimum height=0.5cm] at (0,0) {\scalebox{0.75}{$f$}};
      \node at (-1,-5.5) {\scalebox{0.75}{$V$}};
      \node at (2,-5.5) {\scalebox{0.75}{$V^{\otimes\(n-1\)}$}};
    \end{tikzpicture}
    }}
    \mapsto
    \vcenter{\hbox{
    \begin{tikzpicture}[xscale=0.23,yscale=0.23]
      \path[draw=black] (-0.5,0) -- (-0.5,-4);
      \path[draw=black] (0.5,0) -- (0.5,-5);
      \path[draw=black] (-2.5,-4) arc (180:360:1 and 1);
      \path[draw=black] (-2.5,-4) -- (-2.5, 2);
      \path[draw=black] (3.5,2) arc (0:180:3 and 3);
      \path[draw=black] (3.5,2) -- (3.5,-5);
      \node[draw=black, fill=white,shape=rectangle, minimum width=1cm, minimum height=0.5cm] at (0,0) {\scalebox{0.75}{$f$}};
      \node at (4,-5.5) {\scalebox{0.75}{$V$}};
      \node at (1.5,-5.5) {\scalebox{0.75}{$V^{\otimes\(n-1\)}$}};
    \end{tikzpicture}
    }}
  \end{align*}

  Applying techniques from \cite{NS2} and our bases for the splitting and fusion
  spaces, to this definition, we find that if $X_{a}$ is self-dual, then the
  $2^{\text{nd}}$ Frobenius-Schur indicator is given by
  \begin{align}
    \label{2nd FS Indicator Definition}
    \nu_{2}\(X_{a}\)=\frac{\th_{a}}{d_{a}}
    \vcenter{\hbox{
    \begin{tikzpicture}[xscale=0.23,yscale=0.23]
      \path[draw=black] (-2,0) arc (180:360:1 and 1);
      \path[draw=black] (-2,0)--(-2,0.5);
      \path[draw=black] (0,0)--(0,0.5);
      \path[draw=black] (-2,0.5) arc (-180:-240:2 and 1.25);
      \path[draw=black] (-1,{2.08253-0.5}) arc (300:360:2 and 1.25);
      \path[draw=black] (0,0.5) arc (0:50:2 and 1.25);
      \path[draw=black] (-2,{3.16506-0.5}) arc (180:230:2 and 1.25);
      \path[draw=black, middlearrow={stealth}] (-2,{3.16506-0.5}) -- (-2,3);
      \path[draw=black, middlearrow={stealth}] (0,{3.16506-0.5}) -- (0,3);
      \node at (-2.5,3) {\scalebox{0.75}{$a$}};
      \node at (0.5,3) {\scalebox{0.75}{$a$}};
      \path[draw=black] (0,3) arc (-180:-360:1 and 1);
      \path[draw=black] (-2,3) arc (-180:-360:3 and 3);
      \path[draw=black] (0,-3) arc (0:180:1 and 1);
      \path[draw=black] (-2,-3) -- (-2,-3.5);
      \path[draw=black] (-2,-3.5) arc (180:360:3 and 3);
      \path[draw=black] (0,-3) -- (0,-3.5);
      \path[draw=black] (0,-3.5) arc (180:360:1 and 1);
      \path[draw=black] (2,-3.5) -- (2,3);
      \path[draw=black] (4,-3.5) -- (4,3);
    \end{tikzpicture}
    }}
  \end{align}

  otherwise we define it to be zero. Here the factor $\frac{1}{d_{a}}$ appears
  due to renormalization of the basis elements of
  $\Hom_{\mcC}\(X^{\otimes2},\mbbI\)$ and $\Hom_{\mcC}\(\mbbI,X^{\otimes 2}\)$
  to have norm 1. With this definition and proposition in place we can prove the
  following theorem.

  \begin{theorem}
    \label{Second FS By Transparent}
    If $\mcC$ is a premodular category and $X_{a}$ is a simple self-dual object then 
    \begin{align*}
      \n_{2}\(X_{a}\)&=\frac{1}{D^{2}}\sum_{b,c}N_{bc}^{a}d_{b}d_{c}\(\frac{\th_{b}}{\th_{c}}\)^{2}-\th_{a}\sum_{\g\in\mcC'\setminus\mbbI}d_{\g}\Tr\(R_{\g}^{aa}\).
    \end{align*}
  \end{theorem}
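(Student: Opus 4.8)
The plan is to read \eqref{2nd FS Indicator Definition} as $\nu_2(X_a)=\frac{\th_a}{d_a}\mathcal{D}$, where $\mathcal{D}$ is the closed diagram obtained by braiding the two $a$-strands and closing them through the trivial channel; the inner cap--cup appearing in that diagram is precisely the first (trivial-channel) summand on the right-hand side of \propref{DoubleLoopRemoval}. Since that Proposition is an identity of endomorphisms of $X_a\otimes X_a$, I would solve it for this trivial-channel term, writing it as $\frac{d_a}{D^2}$ times the Kirby-encircled double $a$-strand minus $\sum_{\g\in\mcC'\setminus\mbbI,i}\sqrt{d_\g}$ times the central ladder. Substituting this into \eqref{2nd FS Indicator Definition} and distributing the prefactor $\frac{\th_a}{d_a}$ then splits $\nu_2(X_a)$ into a Kirby contribution and a central contribution, the latter carrying a minus sign inherited from the rearrangement; this is the source of the minus sign in the theorem.

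For the Kirby contribution I would run the Ng--Schauenburg computation of \cite{NS1} verbatim, the only change being that \propref{DoubleLoopRemoval} now plays the role that \cite[Corollary 3.1.11]{BKi} plays in the modular case. Resolving the encircled double strand into fusion channels with \eqref{SplitThenFuse}, trading the resulting $\tilde{s}$-entries and twists via the balancing relation \eqref{Balancing}, and tidying up with the fusion symmetries \eqref{FusionSymmetries}, the prefactor $\frac{\th_a}{d_a}$ recombines with the twists produced by \eqref{Balancing} to leave Bantay's expression $\frac{1}{D^2}\sum_{b,c}N_{bc}^{a}d_bd_c(\th_b/\th_c)^2$. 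Nothing here uses modularity beyond what \eqref{SplitThenFuse}, \eqref{Balancing}, and \eqref{FusionSymmetries} already provide in an arbitrary premodular category, so this term requires no new input.

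The substantive step is the central contribution. For each transparent $X_\g$ the ladder carries a $\g$-strand threaded between the two $a$-strands, and the essential tool is transparency: because $c_{X_\g,Y}=\mathrm{id}$ for every $Y$, the braiding of the $a$-strands slides freely across the $\g$-strand, which can then be straightened and closed into a loop of dimension $d_\g$. The braiding that survives in the $X_\g$-channel is $R^{aa}_{\g,i}$, and summing over the multiplicity index $i$ produces $\Tr(R^{aa}_\g)$; the weight $\sqrt{d_\g}$ from \propref{DoubleLoopRemoval}, the closure of the $\g$-loop, and the theta-symbol normalization \eqref{Fusion/Splitting Normalization} together with the prefactor $\frac{\th_a}{d_a}$ collapse to $\th_a d_\g$. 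Combined with the minus sign from the rearrangement, this gives $-\th_a\sum_{\g\in\mcC'\setminus\mbbI}d_\g\Tr(R^{aa}_\g)$, and adding the Kirby contribution yields the stated formula. As a consistency check, when $\mcC$ is modular $\mcC'=\lcb\mbbI\rcb$, the central sum is empty, and the formula reduces to Bantay's.

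I expect the main obstacle to be this last evaluation, and in particular the simultaneous control of the sign and of the scalar factors. The transparency of $X_\g$ is what makes the ladder collapse, but it has to be applied to the braided configuration rather than to a flattened one, so the order in which the ribbon twist is absorbed and the $\g$-strand is straightened must be chosen so as to land on $\Tr(R^{aa}_\g)$ with exactly the normalization dictated by \eqref{Fusion/Splitting Normalization}; tracking the self-duality of $X_a$ and the orientations of the cups and caps through this closure is where the bookkeeping is most delicate.
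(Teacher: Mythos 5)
Your proposal follows the paper's proof essentially step for step: solve \propref{DoubleLoopRemoval} for the trivial-channel (cap--cup) term, substitute it into \eqref{2nd FS Indicator Definition} (whence the minus sign), evaluate the resulting Kirby term by the Bantay/Ng--Schauenburg computation using \eqref{SplitThenFuse} and the channel-wise twist identities (the paper invokes Kitaev's monodromy eigenvalue $R^{ab}_{c,i}R^{ba}_{c,i}=\th_{c}/\(\th_{a}\th_{b}\)$ rather than the balancing relation, an immaterial difference), and evaluate the ladder term through the eigenvalues $R^{aa}_{\g,i}$ and the theta-symbol normalization \eqref{Fusion/Splitting Normalization}. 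Your bookkeeping $\frac{\th_{a}}{d_{a}}\cdot\sqrt{d_{\g}}\cdot\th\(a,a,\g\)=\th_{a}d_{\g}$ and the trace over the multiplicity index are exactly the paper's final steps, so this is the same argument.
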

  \begin{proof}
    The proof proceeds by applying \propref{DoubleLoopRemoval} to equation
    \eqref{2nd FS Indicator Definition} and then making use of the graphical
    calculus. To simplify notation we observe that since $X_{a}$ is self-dual the
    arrow on the ribbon corresponding to this object can be safely removed.
    \begin{align*}
      \nu_{2}\(X_{a}\)&=\frac{\th_{a}}{d_{a}}
      \vcenter{\hbox{
      \begin{tikzpicture}[xscale=0.23,yscale=0.23]
        \path[draw=black] (-2,0) arc (180:360:1 and 1);
        \path[draw=black] (-2,0)--(-2,0.5);
        \path[draw=black] (0,0)--(0,0.5);
        \path[draw=black] (-2,0.5) arc (-180:-240:2 and 1.25);
        \path[draw=black] (-1,{2.08253-0.5}) arc (300:360:2 and 1.25);
        \path[draw=black] (0,0.5) arc (0:50:2 and 1.25);
        \path[draw=black] (-2,{3.16506-0.5}) arc (180:230:2 and 1.25);
        \path[draw=black] (-2,{3.16506-0.5}) -- (-2,3);
        \path[draw=black] (0,{3.16506-0.5}) -- (0,3);
        \path[draw=black] (0,3) arc (-180:-360:1 and 1);
        \path[draw=black] (-2,3) arc (-180:-360:3 and 3);
        \path[draw=black] (0,-3) arc (0:180:1 and 1);
        \path[draw=black] (-2,-3) -- (-2,-3.5);
        \path[draw=black] (-2,-3.5) arc (180:360:3 and 3);
        \path[draw=black] (0,-3) -- (0,-3.5);
        \path[draw=black] (0,-3.5) arc (180:360:1 and 1);
        \path[draw=black] (2,-3.5) -- (2,3);
        \path[draw=black] (4,-3.5) -- (4,3);
        \node at ({2*cos(110)-2},{4}) {\scalebox{0.75}{$a$}};
      \end{tikzpicture}
      }}
      =\frac{\th_{a}}{d_{a}}\frac{d_{a}}{D^{2}}\vcenter{\hbox{\begin{tikzpicture}[xscale=0.23,yscale=0.23]
        \path[draw=black] (-2,1) arc (-180:-240:2 and 1.25);
        \path[draw=black] (-1,2.08253) arc (300:360:2 and 1.25);
        \path[draw=black] (0,1) arc (0:50:2 and 1.25);
        \path[draw=black] (-2,3.16506) arc (180:230:2 and 1.25);
        \path[draw=black] (-2,1) -- (-2,-0.5);
        \path[draw=black] (0,1) -- (0,-0.5);
        \path[draw=black] (-2,3.16506) -- (-2,3.5);
        \path[draw=black] (0,3.16506) -- (0,3.5);
        \path[draw=black] (-2,-1.25) -- (-2,-3);
        \path[draw=black] (0,-1.25) -- (0,-3);
        \path[draw=black] ({2*cos(130)-1},{sin(130)}) arc (130:410:2 and 1);
        \path[draw=black] (0,-3) arc (180:360:1 and 1);
        \path[draw=black] (-2,-3) arc (180:360:3 and 3);
        \path[draw=black] (0,3.5) arc (-180:-360:1 and 1);
        \path[draw=black] (-2,3.5) arc (-180:-360:3 and 3);
        \path[draw=black] (2,3.5) -- (2,-3);
        \path[draw=black] (4,3.5) -- (4,-3);
        \node at ({2*cos(110)-2},{4}) {\scalebox{0.75}{$a$}};
      \end{tikzpicture}
      }}
      -
      \frac{\th_{a}}{d_{a}}\sum_{\g\in\mcC'\setminus\mbbI,i,j}\sqrt{d_{\g}}
      \vcenter{\hbox{
      \begin{tikzpicture}[xscale=0.23,yscale=0.23]
        \path[draw=black] (-2,0.8) arc (180:360:1 and 1);
        \path[draw=black] (-2,0.8) arc (-180:-240:2 and 1.25);
        \path[draw=black] (-1,{2.08253-0.2}) arc (300:360:2 and 1.25);
        \path[draw=black] (0,0.8) arc (0:50:2 and 1.25);
        \path[draw=black] (-2,{3.16506-0.2}) arc (180:230:2 and 1.25);
        \path[draw=black] (-2,{3.16506-0.2}) -- (-2,3);
        \path[draw=black] (0,{3.16506-0.2}) -- (0,3);
        \path[draw=black] (0,3) arc (-180:-360:1 and 1);
        \path[draw=black] (-2,3) arc (-180:-360:3 and 3);
        \path[draw=black] (0,-3.5) arc (0:180:1 and 1);
        \path[draw=black] (-2,-3.5) arc (180:360:3 and 3);
        \path[draw=black] (0,-3.5) arc (180:360:1 and 1);
        \path[draw=black] (2,-3.5) -- (2,3);
        \path[draw=black] (4,-3.5) -- (4,3);
        \path[draw=black, middlearrow={stealth}] (-1,-2.5) -- (-1,-0.25);
        \node[draw=black, fill=white, shape=circle, inner sep=0.5pt, minimum width=10pt] at (-1,-0.1) {\scalebox{0.75}{$i$}};
        \node[draw=black, fill=white, shape=circle, inner sep=0.5pt, minimum width=10pt] at (-1,-2.7) {\scalebox{0.75}{$j$}};
        \node at ({2*cos(110)-2},{4}) {\scalebox{0.75}{$a$}};
        \node at (-0.25,-1.5) {\scalebox{0.75}{$\g$}};
      \end{tikzpicture}
      }}\\
      &=\frac{\th_{a}}{D^{2}}\sum_{b}d_{b}\vcenter{\hbox{
      \begin{tikzpicture}[xscale=0.23,yscale=0.23]
        \node at ({2*cos(150)-0.5},{sin(150)}) {\scalebox{0.75}{$b$}};
        \path[draw=black, middlearrow={stealth reversed}] ({2*cos(150)},{sin(150)}) arc (160:270:2 and 1);
        \path[draw=black] ({2*cos(130)},{sin(190)}) -- ({2*cos(130)},3.5);
        \path[draw=black] ({2*cos(130)},3.5) arc(-180:-360:1 and 1);
        \path[draw=black] ({2*cos(130)+2},3.5)--({2*cos(130)+2},-4);
        \path[draw=black] ({2*cos(130)+2},-4) arc (180:360:1 and 1);
        \path[draw=black] ({2*cos(130)+2.5},-1)--({2*cos(130)+4.5},-1);
        \path[draw=black] ({2*cos(130)+4.5},-1) arc (270:450:2 and 1);
        \path[draw=black] ({2*cos(130)+4},-0.5)--({2*cos(130)+4},2.5);
        \path[draw=black] ({2*cos(130)+4},2.5) arc (-180:-360:2 and 2);
        \path[draw=black] ({2*cos(130)+8},2.5)--({2*cos(130)+8},-4);
        \path[draw=black] ({2*cos(130)},-4) arc (180:360:4 and 4);
        \path[draw=black] ({2*cos(130)},-4) -- ({2*cos(130)},{sin(190)-0.5});
        \path[draw=black] ({2*cos(130)+3.5},1) -- ({2*cos(130)+2.75},1);
        \path[draw=black] (0,1) arc (90:120:2 and 1);
        \path[draw=black] ({2*cos(130)+4},-3) -- ({2*cos(130)+4},-1.5);
        \path[draw=black] ({2*cos(130)+5},-3) arc (90:180:1 and 1);
        \path[draw=black] ({2*cos(130)+5},-4) arc (-90:90:0.5 and 0.5);
        \path[draw=black] ({2*cos(130)+5},-4) arc (-90:-130:1 and 1);
        \path[draw=black] ({2*cos(130)+4},-3) arc (180:200:1 and 1);
        \node at ({2*cos(110)-1},{3}) {\scalebox{0.75}{$a$}};
      \end{tikzpicture}
      }}
      -
      \frac{\th_{a}}{d_{a}}\sum_{\g\in\mcC'\setminus\mbbI,i,j}\sqrt{d_{\g}}R_{\g,i}^{aa}
      \vcenter{\hbox{
      \begin{tikzpicture}[xscale=0.23,yscale=0.23]
        \path[draw=black] (-2,2.75) arc (180:360:1 and 1);
        \path[draw=black] (-2,2.75) -- (-2,3.25);
        \path[draw=black] (0,2.75) -- (0,3.25);
        \path[draw=black] (0,3.25) arc (-180:-360:1 and 1);
        \path[draw=black] (-2,3.25) arc (-180:-360:3 and 3);
        \path[draw=black] (0,-2.75) arc (0:180:1 and 1);
        \path[draw=black] (-2,-3.25) arc (180:360:3 and 3);
        \path[draw=black] (0,-3.25) arc (180:360:1 and 1);
        \path[draw=black] (2,-3.25) -- (2,3.25);
        \path[draw=black] (4,-3.25) -- (4,3.25);
        \path[draw=black] (-2,-2.75) -- (-2,-3.25);
        \path[draw=black] (0,-2.75) -- (0,-3.25);
        \path[draw=black, middlearrow={stealth}] (-1,-1.75) -- (-1,1.75);
        \node[draw=black, fill=white, shape=circle, inner sep=0.5pt, minimum width=10pt] at (-1,1.75) {\scalebox{0.75}{$i$}};
        \node[draw=black, fill=white, shape=circle, inner sep=0.5pt, minimum width=10pt] at (-1,-1.75) {\scalebox{0.75}{$j$}};
        \node at (-0.25,-0.5) {\scalebox{0.75}{$\g$}};
        \node at ({2*cos(110)-2},{3}) {\scalebox{0.75}{$a$}};
        \node at ({2*cos(110)+1.25},{3}) {\scalebox{0.75}{$a$}};
      \end{tikzpicture}
      }}
    \end{align*}
    \begin{align*}
      \phantom{\n_{2}\(X_{a}\)}&=\frac{\th_{a}^{2}}{D^{2}}\sum_{b}d_{b}\vcenter{\hbox{
      \begin{tikzpicture}[xscale=0.23,yscale=0.23]
        \node at ({2*cos(150)-0.5},{sin(150)}) {\scalebox{0.75}{$b$}};
        \path[draw=black, middlearrow={stealth reversed}] ({2*cos(150)},{sin(150)}) arc (160:270:2 and 1);
        \path[draw=black] ({2*cos(130)},{sin(190)}) -- ({2*cos(130)},2);
        \path[draw=black] ({2*cos(130)},2) arc(-180:-360:1 and 1);
        \path[draw=black] ({2*cos(130)+2},2)--({2*cos(130)+2},-1.5);
        \path[draw=black] ({2*cos(130)+2},-1.5) arc (180:360:1 and 1);
        \path[draw=black] ({2*cos(130)+2.5},-1)--({2*cos(130)+4.5},-1);
        \path[draw=black] ({2*cos(130)+4.5},-1) arc (270:450:2 and 1);
        \path[draw=black] ({2*cos(130)+4},-0.5)--({2*cos(130)+4},2);
        \path[draw=black] ({2*cos(130)+4},2) arc (-180:-360:2 and 2);
        \path[draw=black] ({2*cos(130)+8},2)--({2*cos(130)+8},-0.5);
        \path[draw=black] ({2*cos(130)},-0.5) arc (180:360:4 and 4);
        \path[draw=black] ({2*cos(130)+3.5},1) -- ({2*cos(130)+2.75},1);
        \path[draw=black] (0,1) arc (90:120:2 and 1);
        \node at ({2*cos(150)-0.25},{2}) {\scalebox{0.75}{$a$}};
       \end{tikzpicture}
       }}
      -
      \frac{\th_{a}}{d_{a}}\sum_{\g\in\mcC'\setminus\mbbI,i,j}\sqrt{d_{\g}}R_{\g,i}^{aa}\th\(a,a,\g\)\delta_{ij}\\
      &=\frac{\th_{a}^{2}}{D^{2}}\sum_{b,c,i,j}\frac{d_{b}d_{c}}{\th\(a,b,c\)}\vcenter{\hbox{\begin{tikzpicture}[xscale=0.23,yscale=0.23]
        \path[draw=black] (-2,1) arc (-180:-240:2 and 1.25);
        \path[draw=black] (-1,2.08253) arc (300:360:2 and 1.25);
        \path[draw=black] (0,1) arc (0:50:2 and 1.25);
        \path[draw=black] (-2,3.16506) arc (180:230:2 and 1.25);
        \path[draw=black] (-2,{3.16506*1}) arc (-180:-240:2 and 1.25);
        \path[draw=black] (-1,{2.08253-1+3.16506*1}) arc (300:360:2 and 1.25);
        \path[draw=black] (0,{3.16506*1}) arc (0:50:2 and 1.25);
        \path[draw=black] (-2,{3.16506-1+3.16506*1}) arc (180:230:2 and 1.25);
        \path[draw=black] (-2,{3.16506*2-1}) arc (-180:-240:2 and 1.25);
        \path[draw=black] (-1,{2.08253+2.16506*2}) arc (300:360:2 and 1.25);
        \path[draw=black] (0,{3.16506*2-1}) arc (0:50:2 and 1.25);
        \path[draw=black] (-2,{3.16506+2.16506*2}) arc (180:230:2 and 1.25);
        \path[draw=black] (-2,{3.16506*3-2}) arc (-180:-240:2 and 1.25);
        \path[draw=black] (-1,{2.08253+2.16506*3}) arc (300:360:2 and 1.25);
        \path[draw=black] (0,{3.16506*3-2}) arc (0:50:2 and 1.25);
        \path[draw=black] (-2,{3.16506+2.16506*3}) arc (180:230:2 and 1.25);
        \path[draw=black] (-2,0) -- (-2,1);
        \path[draw=black] (0,0) -- (0,1);
        \path[draw=black] (-2,0) arc (180:360:3 and 3);
        \path[draw=black] (0,0) arc (180:360:1 and 1);
        \path[draw=black] (-2,{3.16506+2.16506*3}) -- (-2,10);
        \path[draw=black] (0,{3.16506+2.16506*3}) -- (0,10);
        \path[draw=black] (-2,10) arc (-180:-360:3 and 3);
        \path[draw=black] (0,10) arc (-180:-360:1 and 1);
        \path[draw=black, middlearrow={stealth reversed}] (2,10) -- (2,8);
        \path[draw=black] (4,10) -- (4,8);
        \path[draw=black, middlearrow={stealth}] (2,0) -- (2,2);
        \path[draw=black] (4,0) -- (4,2);
        \path[draw=black] (4,2) arc (0:180:1 and 1);
        \path[draw=black] (2,8) arc (180:360:1 and 1);
        \path[draw=black, middlearrow={stealth}] (3,3) -- (3,7);
        \node[draw=black, fill=white, shape=circle, inner sep=0.5pt, minimum width=10pt] at (3,7) {\scalebox{0.75}{$i$}};
        \node[draw=black, fill=white, shape=circle, inner sep=0.5pt, minimum width=10pt] at (3,3) {\scalebox{0.75}{$j$}};
        \node at (4.5,2) {\scalebox{0.75}{$a$}};
        \node at (1.5,2) {\scalebox{0.75}{$b$}};
        \node at (4.5,8) {\scalebox{0.75}{$a$}};
        \node at (1.5,8) {\scalebox{0.75}{$b$}};
        \node at (3.5,4.5) {\scalebox{0.75}{$c$}};
      \end{tikzpicture} 
      }}
      -\th_{a}\sum_{\g\in\mcC'\setminus\mbbI}d_{\g}\Tr\(R_{\g}^{aa}\)\\
      &=\frac{\th_{a}^{2}}{D^{2}}\sum_{b,c,i,j}\frac{d_{b}d_{c}\(R_{c,i}^{ab}R_{c,i}^{ba}\)^{2}}{\th\(a,b,c\)}\th\(a,b,c\)\d_{ij}
      -\th_{a}\sum_{\g\in\mcC'\setminus\mbbI}d_{\g}\Tr\(R_{\g}^{aa}\)\\
      &=\frac{\th_{a}^{2}}{D^{2}}\sum_{b,c,i}d_{b}d_{c}\(R_{c,i}^{ab}R_{c,i}^{ba}\)^{2}
      -\th_{a}\sum_{\g\in\mcC'\setminus\mbbI}d_{\g}\Tr\(R_{\g}^{aa}\)
    \end{align*}

    Applying equation (216) of Appendix E in \cite{K1} and noting that
    $\(\tilde{s}^{2}\)_{\g0}=d_{\g}D^{2}$ for $X_{\g}\in\mcC'$ gives
    \begin{align*}
      \n_{2}\(X_{a}\)&=\frac{\th_{a}^{2}}{D^{2}}\sum_{b,c,i}d_{b}d_{c}\(\frac{\th_{c}}{\th_{a}\th_{b}}\)^{2}
      -\th_{a}\sum_{\g\in\mcC'\setminus\mbbI}d_{\g}\Tr\(R_{\g}^{aa}\).
    \end{align*}

    Making use of equation \eqref{FusionSymmetries} we have
    $N_{ab}^{c}=N_{ba}^{c}=N_{bc^{*}}^{a}=N_{c^{*}b}^{a}$. However,
    $\th_{b^{*}}=\th_{b}$ and $d_{b^{*}}=d_{b}$ so
    \begin{align*}
      \n_{2}\(X_{a}\)&=\frac{1}{D^{2}}\sum_{b,c}N_{bc^{*}}^{a}d_{b}d_{c^{*}}\(\frac{\th_{c^{*}}}{\th_{b}}\)^{2}
      -\th_{a}\sum_{\g\in\mcC'\setminus\mbbI}d_{\g}\Tr\(R_{\g}^{aa}\).
    \end{align*}

    Reindexing the first sum gives the desired result.
  \end{proof}

  Since the $R$-matrices appear in this indicator, it is of limited
  computational use. However, one can show that the two sums of \thmref{Second
  FS By Transparent} are both rational integers. To do this, we first recall
  that the M\"{u}ger center of $\mcC$ is a ribbon fusion category over $\mbbC$
  with fusion rules and twists descending from $\mcC$. Moreover, $c_{W,V}\circ
  c_{V,W}=id_{V\otimes W}$ on $\mcC'$ by its definition. So applying
  \cite[Proposition 6.1]{NS2}, we can deduce that if $X_{\g}\in\mcC'$ then
  $\th_{\g}=\pm1$. However, $\th_{a}R^{aa}_{c,i}=\pm\sqrt{\th_{c}}$ and so, if
  $X_{\g}\in\mcC'$, we deduce that $\th_{a}R^{aa}_{\g,i}\in\lcb\pm1,\pm i\rcb$,
  which leads to the following corollary.
  \begin{corollary}
    \label{Modular FS is Algebraic Integer}
    If $\mcC$ is premodular and $X_{a}\in\mcC$ simple, then 
    \begin{align*}
      \frac{1}{D^{2}}\sum_{b,c}N_{bc}^{a}d_{b}d_{c}\(\frac{\th_{b}}{\th_{c}}\)^{2}
    \end{align*}
    is real and if $X_{a}$ is self-dual then it is a rational integer.
  \end{corollary}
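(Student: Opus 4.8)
First I would show the displayed quantity is fixed by complex conjugation. Two facts feed in. The categorical dimensions are real: by the symmetries in \eqref{SMatrixSymmetries} one has $d_b = \tilde{s}_{b0} = \tilde{s}_{b^*0} = \overline{\tilde{s}_{b0}} = \overline{d_b}$, so each $d_b\in\mbbR$ and $D^2 = \sum_b d_b^2$ is a positive real. The twists are roots of unity by Vafa's Theorem \cite{Vafa}, so $\overline{\th_b} = \th_b^{-1}$ and hence $\overline{(\th_b/\th_c)^2} = (\th_c/\th_b)^2$. Conjugating the sum thus leaves $N_{bc}^a d_b d_c / D^2$ untouched and swaps $(\th_b/\th_c)^2$ for $(\th_c/\th_b)^2$; interchanging the dummy indices $b$ and $c$ and using $N_{bc}^a = N_{cb}^a$ from \eqref{FusionSymmetries} returns the original expression. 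This argument uses no self-duality assumption, so the sum is real for every simple $X_a$.

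\textbf{Integrality.} Now take $X_a$ self-dual and read \thmref{Second FS By Transparent} as
\begin{equation*}
\frac{1}{D^2}\sum_{b,c} N_{bc}^a d_b d_c \left(\frac{\th_b}{\th_c}\right)^2 = \n_2(X_a) + \th_a \sum_{\g\in\mcC'\setminus\mbbI} d_\g \Tr\left(R_\g^{aa}\right).
\end{equation*}
The indicator $\n_2(X_a)$ is a rational integer: for a self-dual simple object the general theory of \cite{NS1} gives $\n_2(X_a)\in\{\pm1\}$. For the transparent sum I would use the two inputs assembled just before the statement: because $\mcC'$ is symmetric, $\th_a R_{\g,i}^{aa}\in\{\pm1,\pm i\}$ for every splitting index $i$, so each $\th_a\Tr(R_\g^{aa}) = \sum_i \th_a R_{\g,i}^{aa}$ lies in $\mbbZ[i]$; and by Deligne's Theorem \cite{Deligne} the symmetric category $\mcC'$ is the representation category of a finite supergroup, so $d_\g\in\mbbZ$. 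Consequently $\th_a\sum_\g d_\g\Tr(R_\g^{aa})$ is a Gaussian integer.

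\textbf{Combining.} The decisive step is then an observation about $\mbbZ[i]$ rather than a calculation. By the reality part the left-hand side of the displayed identity is real, and $\n_2(X_a)=\pm1$ is real, so the transparent sum is real. A real Gaussian integer is a rational integer, so $\th_a\sum_\g d_\g\Tr(R_\g^{aa})\in\mbbZ$, and adding $\n_2(X_a)\in\mbbZ$ shows the displayed sum is a rational integer. I expect the main obstacle to be marshalling the two structural inputs cleanly -- that each $\th_a R_{\g,i}^{aa}$ is a fourth root of unity and that every transparent object has integer dimension -- since the whole argument rests on the transparent sum being a \emph{real} Gaussian integer, not merely an algebraic number.
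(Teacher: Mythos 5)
Your proposal is correct and follows essentially the same route as the paper: both rest on \thmref{Second FS By Transparent}, the facts $\n_{2}\(X_{a}\)\in\lcb-1,0,1\rcb$ and $\th_{a}R^{aa}_{\g,i}\in\lcb\pm1,\pm i\rcb$ to place the sum in $\mbbZ\[i\]$, and the conjugation-plus-reindexing symmetry ($N_{bc}^{a}=N_{cb}^{a}$, $d_{b}\in\mbbR$, $\bar{\th}_{b}=\th_{b}^{-1}$) to conclude reality, hence membership in $\mbbZ\[i\]\cap\mbbR=\mbbZ$. Your only addition is to make explicit the integrality of $d_{\g}$ for transparent objects (via Deligne), a point the paper leaves implicit, which is a welcome clarification rather than a departure.
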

  \begin{proof}
    Applying \cite{NS2}, we know that $\n_{2}\(X_{a}\)\in\lcb-1,0,1\rcb$.
    Coupling this observation with the aforementioned fact that
    $\th_{a}R^{aa}_{\g,i}\in\lcb \pm1,\pm i\rcb$ for $X_{\g}\in\mcC'$, we can
    conclude that
    \begin{align*}
      \frac{1}{D^{2}}\sum_{b,c}N_{bc}^{a}d_{b}d_{c}\(\frac{\th_{b}}{\th_{c}}\)^{2}\in\mbbZ\[i\].
    \end{align*}

    However, $N_{bc}^{a}=N_{cb}^{a}$, $d_{b}\in\mbbR$, and
    $\bar{\th_{b}}=\th_{b}^{-1}$ for all $a,b,c$. So for any $a$ we have that
    $\frac{1}{D^{2}}\sum_{b,c}N_{ab}^{c}d_{b}d_{c}\(\frac{\th_{c}}{\th_{b}}\)^{2}$
    is invariant under complex conjugation. Consequently
    $\frac{1}{D^{2}}\sum_{b,c}N_{bc}^{a}d_{b}d_{c}\(\frac{\th_{b}}{\th_{c}}\)^{2}\in\mbbZ\[i\]\cap\mbbR=\mbbZ$.
  \end{proof}
  \begin{remark}
    One can apply this corollary to show that the M\"{u}ger center of a
    premodular category is integral as follows. Recall from \cite[Section
    6]{NS2}, that if $\a,\b\in\mcC'$ then
    $\th_{\a\otimes\b}=\th_{\a}\otimes\th_{\b}$ so $\th_{\a\otimes
    \b}^{2}=\th_{\a}^{2}\otimes\th_{\b}^{2}$. Consequently,
    $\sum_{\b\in
    C'}N_{\a\g}^{\b}\th_{\b}^{2}d_{\b}=\th_{\a}^{2}\th_{\g}^{2}d_{\a}d_{\g}$
    which can be rearranged to give
    \begin{align*}
      \sum_{\b\in\mcC'}N_{\a\g}^{\b}d_{\b}d_{\g}\(\frac{\th_{\b}}{\th_{\g}}\)^{2}=\th_{\a}^{2}d_{\a}d_{\g}^{2}.
    \end{align*}

    Summing over $\g\in\mcC'$ and reindexing gives
    \begin{align*}
      \th_{\a}d_{\a}&=\frac{1}{D_{\mcC'}^{2}}\sum_{\b,\g\in\mcC'}N_{\b\g}^{\a}d_{\b}d_{\g}\(\frac{\th_{\b}}{\th_{\g}}\)^{2}\in\mbbZ.
    \end{align*}

    This is equivalent to saying that the M\"{u}ger center is an integral
    subcategory of $\mcC$. Since the M\"{u}ger center is a symmetric category
    and hence necessarily Grothendieck-equivalent to a representation category of a finite group, we know
    that it is integral. However, this does provide a new (to this author) route
    to this result.
  \end{remark}

  Examination of \thmref{Second FS By Transparent} reveals that $R^{aa}_{c}$
  enters into the formula for the second indicator. Since the $R$-matrices
  involve square roots of the twists, we have that $R^{ab}_{c}$ is a
  $2N^{\text{th}}$ root of unity where $N=\ord\(T\)$. Coupling this observation
  with Frobenius-Schur exponent of \cite{NS2} motivates the following
  conjecture.
  \begin{conjecture}
    \label{CyclotomicDimensionsConjecture}
    If $\mcC$ is premodular, $X_{a}$ is a simple object and $N=\ord\(T\)$, then $d_{a}\in\mbbZ\[\z_{2N}\]$.
  \end{conjecture}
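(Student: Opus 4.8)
The plan is to split the statement into an integrality claim and a field-of-definition claim. First I would observe that $d_a=\tilde{s}_{0a}$ is an eigenvalue of the fusion matrix $N_a$, whose entries are nonnegative integers, so $d_a$ is an algebraic integer. Since the ring of integers of the cyclotomic field $\mbbQ(\z_{2N})$ is exactly $\mbbZ[\z_{2N}]$, it then suffices to prove the field containment $d_a\in\mbbQ(\z_{2N})$. Writing $\Gal$ for the absolute Galois group of $\mbbQ$, this is equivalent to showing that every $\sigma\in\Gal$ fixing $\z_{2N}$ fixes each $d_a$. Such a $\sigma$ automatically fixes every twist $\th_a$, an $N$-th root of unity by Vafa's Theorem, as well as every braiding eigenvalue $R^{ab}_{c,i}$, a $2N$-th root of unity as noted above; so all the ``discrete'' premodular data are $\sigma$-invariant, and only the dimensions are in question.

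To control the dimensions I would pass to the modular setting, where cyclotomic integrality is known. Recall that the Drinfeld center $\mcZ(\mcC)$ is modular and that the canonical functor $\mcC\to\mcZ(\mcC)$, $X\mapsto(X,c_{X,-})$, is a fully faithful braided ribbon embedding; in particular it preserves both dimensions and twists, so each $d_a$ occurs as the dimension of a simple object of the modular category $\mcZ(\mcC)$. For a modular category $\mcM$ the congruence subgroup property of its $\SL(2,\mbbZ)$-representation (de Boer--Goeree, Coste--Gannon, Ng--Schauenberg) forces that representation to factor through $\SL(2,\mbbZ/M\mbbZ)$ with $M=\ord(T_\mcM)$, whence all entries of $\tilde{S}$, and in particular all dimensions, lie in $\mbbQ(\z_M)$. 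Applying this to $\mcM=\mcZ(\mcC)$ gives $d_a\in\mbbZ[\z_M]$ with $M=\ord(T_{\mcZ(\mcC)})$.

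It then remains to bound $M=\ord(T_{\mcZ(\mcC)})$ by $2N$. Here I would invoke the theorem of Ng--Schauenberg that the Frobenius--Schur exponent is preserved by the center, $\text{FSexp}(\mcZ(\mcC))=\text{FSexp}(\mcC)$, together with the fact that for a modular category the Frobenius--Schur exponent equals the order of its $T$-matrix, so that $M=\text{FSexp}(\mcC)$. The task reduces to showing $\text{FSexp}(\mcC)\mid 2N$ for the ribbon category $\mcC$. This is exactly where the factor of two enters: the braiding eigenvalues satisfy $R^{ab}_{c,i}R^{ba}_{c,i}=\th_c\th_a^{-1}\th_b^{-1}$, so their squares are $N$-th roots of unity and the half-twists they encode have order dividing $2N$; the exponent of $\mcC$ measured through these data should therefore divide $2N$, giving $M\mid 2N$ and hence $d_a\in\mbbZ[\z_{2N}]$.

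I expect the order bound $\ord(T_{\mcZ(\mcC)})\mid 2N$ to be the \emph{crux}. Controlling the twists of the double strictly in terms of the twists of $\mcC$ is precisely the kind of structural bound on $\mcZ(\mcC)$ in terms of $\mcC$ alone that is flagged as open earlier in this work, so making the last paragraph fully rigorous is the main obstacle and the reason the statement is posed only as a conjecture. I would also remark that any sharp resolution, namely establishing $\text{FSexp}(\mcC)=\ord(T_\mcC)=N$ outright, would in fact yield the stronger conclusion $d_a\in\mbbZ[\z_N]$; the displayed $2N$ therefore carries deliberate slack, accounting for the square roots of twists appearing in the $R$-matrices.
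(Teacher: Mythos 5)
This statement is a \emph{conjecture} in the paper: the author gives no proof, only verification for premodular categories of rank $<5$ together with the remark that the Ng--Schauenburg indicator techniques cannot yet be applied. So there is no paper proof to compare against, and I can only assess your outline on its own terms. The first stages are sound and standard: $d_{a}$ is an algebraic integer (it is an eigenvalue of the nonnegative integer matrix $N_{a}$), the ring of integers of $\mbbQ(\z_{2N})$ is $\mbbZ[\z_{2N}]$, the functor $X\mapsto (X,c_{X,-})$ embeds $\mcC$ fully faithfully into the modular category $\mcZ(\mcC)$ preserving dimensions and twists, and the Ng--Schauenburg cyclotomic/congruence theorem then gives $d_{a}\in\mbbZ[\z_{M}]$ with $M=\ord(T_{\mcZ(\mcC)})=\text{FSexp}(\mcZ(\mcC))=\text{FSexp}(\mcC)$.

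The crux, however, is not merely open as you suggest; it is false. You need $\text{FSexp}(\mcC)\mid 2N$, equivalently $\ord(T_{\mcZ(\mcC)})\mid 2N$. Take $\mcC=\Rep(G)$ with its trivial braiding: this is premodular (indeed symmetric), every twist equals $1$, so $N=1$, and every braiding eigenvalue is $\pm1$; yet $\text{FSexp}(\Rep(G))=\exp(G)$, and correspondingly $\ord(T_{\mcZ(\Rep(G))})=\exp(G)$, since the twist of the simple object of $D(G)$ labelled by a conjugacy class representative $g$ and a character $\chi$ of its centralizer is $\chi(g)/\chi(1)$. For $G=\mbbZ_{5}$ your bound would demand $5\mid 2$. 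The failure is not confined to the symmetric case: for the properly premodular category $\mcC=\Fib\boxtimes\Rep(\mbbZ_{7})$ one has $N=5$ while $\ord(T_{\mcZ(\mcC)})$ is divisible by $7$, so again $\ord(T_{\mcZ(\mcC)})\nmid 2N$. In all these examples the conjecture itself holds ($d_{a}\in\mbbZ$ or $\mbbZ[\z_{10}]$ respectively); it is your intermediate claim that breaks. The conceptual error is the sentence asserting that the exponent of $\mcC$ ``measured through these data'' divides $2N$: the Frobenius--Schur exponent is not a function of the twists and braiding eigenvalues at all, but also sees the associativity data --- which is exactly why $\Rep(G)$, with completely trivial $T$- and $R$-data, has exponent $\exp(G)$.

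What the counterexamples show is that any argument through the double must bound not $\ord(T_{\mcZ(\mcC)})$ but only the cyclotomic field generated by the dimensions (or $S$-matrix entries) of the \emph{image} of $\mcC$ inside $\mcZ(\mcC)$: the center inevitably acquires group-like twists of unbounded order coming from the M\"{u}ger center $\mcC'$, and these are invisible to $N$. That refined statement is essentially the open problem, consistent with the paper's observation that $\mcZ(\mcC)$ cannot currently be controlled in terms of $\mcC$ alone. Finally, your closing remark that a ``sharp resolution'' $\text{FSexp}(\mcC)=\ord(T_{\mcC})$ would give $d_{a}\in\mbbZ[\z_{N}]$ is ruled out by the paper itself, which points out that $\mcC(sl(2),8)_{ad}$ violates $d_{a}\in\mbbZ[\z_{N}]$; so no argument can prove that stronger bound, and the factor $2$ is not slack but necessary.
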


  This result is reminiscent of the Ng-Schauenburg Theorem for modular
  categories, which tells us that for any simple object $X_{a}$,
  $d_{a}\in\mbbZ\[\z_{N}\]$ where $N=\ord\(T\)$ \cite{NS2}. One might wonder if
  this theorem holds in the premodular setting despite the appearance of the
  $R$-matrices. However, examination of the premodular category
  $\mcC\(sl\(2\),8\)_{ad}$ reveals that the Ng-Schauenburg Theorem fails, but
  that \conjref{CyclotomicDimensionsConjecture} holds. Preliminary results
  indicate that more complicated combinations of the $R$-matrices may appear in
  higher indicators so more work is needed before the techniques of Ng and
  Schauenburg can be applied to \conjref{CyclotomicDimensionsConjecture}.
  However, this conjecture has been verified for premodular categories of rank
  $<5$.

\section{Rank 4 premodular categories}
  To classify all rank 4 premodular categories, we would need to determine the
  premodular datum -- $\(\tilde{S},T,N_{0},\ldots, N_{n}\)$ -- in addition to the
  $R$-- and $F$--matrices. However, Ocneanu Rigidity tells us that there are only
  finitely many braided fusion categories realizing a given fusion ring and so it
  suffices to understand only the premodular datum. When classifying modular
  categories, one has a full range of Galois techniques available in addition to
  the divisibility of dimensions and the universal grading group. However, in the
  premodular setting, all of these techniques fail. Indeed, examination of
  $\mcC\(sl\(2\),8\)_{ad}$ reveals that the universal grading group need not be
  isomorphic to $\mcC_{pt}$, the full subcategory generated by the invertible
  objects. This category further illustrates that the Ng-Schauenburg Theorem
  fails.\footnote{The dimensions of the simple objects need not live in the
  cyclotomic extension of $\mbbQ$ generated by the twists.} If we instead consider
  $\mcC\(sl\(2\),6\)_{ad}$, then we see that the square of the dimensions of the
  simple objects need not divide the categorical dimension. Finally, the tensor
  category $\Fib\times\Rep\(\mbbZ_{2}\)$ reveals that the Galois techniques fail
  in the premodular setting.

  Given the  failure of many of the techniques used in modular classification,
  what is left? To perform low rank premodular classification, people have, in
  the past, examined the double $\mcZ\(\mcC\)$ as a module category \cite{O2}.
  However, in the rank 4 case, this approach is infeasible due to the number of
  simple objects. To overcome these difficulties, we will make use of the
  equations governing the premodular datum as well as cyclotomic and number
  theoretic techniques; the minimal modularization developed by Brugui\`{e}res; and
  the $2^{\text{nd}}$ Frobenius-Schur indicators.

  Recalling our partition of premodular categories into symmetric, properly
  premodular, and modular, we will discuss each of these classes in turn. We begin
  with the symmetric case, which is readily dealt with using the classification
  due to \cite{D1}.
  \begin{proposition}
    If $\mcC$ is a rank 4 symmetric category, then it is Grothendieck equivalent
    to $\Rep\(G\)$ where $G$ is $\mbbZ/4\mbbZ$, $\mbbZ/2\mbbZ\times\mbbZ/2\mbbZ$, $D_{10}$,
    or $\mfA_{4}$. 
  \end{proposition}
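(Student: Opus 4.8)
The plan is to reduce the statement to a purely group-theoretic classification and then run the arithmetic of character degrees. First I would invoke Deligne's theorem \cite{Deligne}: any symmetric fusion category over $\mbbC$ is equivalent, as a symmetric tensor category, to a super-Tannakian category $\Rep\(G,z\)$ for a finite group $G$ and a central element $z$ with $z^{2}=1$. The datum $z$ only inserts a sign into the symmetry on the $\mbbZ_{2}$-grading it defines and leaves the tensor product of objects untouched, so the Grothendieck semiring of $\mcC$ is isomorphic to $Gr\(\Rep\(G\)\)$. Hence Grothendieck equivalence of $\mcC$ is governed entirely by $G$, and $\rank\(\mcC\)=4$ forces $G$ to have exactly four irreducible complex representations, i.e.\ exactly four conjugacy classes.

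It then remains to classify finite groups $G$ with exactly four conjugacy classes. Landau's reciprocal relation $\sum_{i=1}^{4}\abs{Z_{G}\(x_{i}\)}^{-1}=1$, taken over class representatives $x_{i}$, bounds $\abs{G}$: the identity forces one centralizer to equal $G$, and a sum of four unit fractions equal to $1$ has largest denominator at most $42$ (attained by $\tfrac12+\tfrac13+\tfrac17+\tfrac1{42}$), so $\abs{G}\le42$. Within this finite range I would stratify by $\abs{G^{ab}}$, the number of linear characters, using $\sum_{i=1}^{4}d_{i}^{2}=\abs{G}$ and $d_{i}\mid\abs{G}$ with $d_{1}=1$. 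The abelian case gives $\abs{G}=4$, hence $\mbbZ_{4}$ and $\mbbZ_{2}\times\mbbZ_{2}$. For $\abs{G^{ab}}=3$ the degrees are $1,1,1,d$, and $d\mid\(3+d^{2}\)$ forces $d\mid3$, so $d=3$, $\abs{G}=12$, and $G\cong\mfA_{4}$. For $\abs{G^{ab}}=2$ the degrees are $1,1,d_{3},d_{4}$, and a parity and divisibility check on $\abs{G}=2+d_{3}^{2}+d_{4}^{2}$ leaves only $d_{3}=d_{4}=2$, giving $\abs{G}=10$ and $G\cong D_{10}$. Finally $\abs{G^{ab}}=1$ would make $G$ perfect, which the bound $\abs{G}\le42$ excludes, the smallest nontrivial perfect group $\mfA_{5}$ having order $60$.

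To finish I would record that the four representation rings are pairwise distinct: the two abelian rings share the degree sequence $1,1,1,1$ but are separated by their groups of invertible objects $\mbbZ_{4}$ versus $\mbbZ_{2}\times\mbbZ_{2}$, while $D_{10}$ and $\mfA_{4}$ are singled out by the degree sequences $1,1,2,2$ and $1,1,1,3$. Thus every rank 4 symmetric category is Grothendieck equivalent to exactly one of the listed groups. I expect the group-theoretic step to be the main obstacle: although Landau's relation guarantees finiteness, converting the crude bound $\abs{G}\le42$ into a clean exclusion of the perfect case and, above all, a complete pruning of the $\abs{G^{ab}}=2$ configurations is where the genuine care is needed, since the unit-fraction bound is far from tight and a priori invites an order-by-order enumeration rather than the one-line divisibility arguments available in the other strata.
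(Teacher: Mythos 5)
Your route is the one the paper intends but does not print: the paper simply cites Deligne's classification, and your reduction — Deligne gives $\Rep(G,z)$, the central element $z$ alters only the symmetry and not the tensor product, so the question becomes the classification of finite groups with exactly four conjugacy classes — is the correct way to fill that in. Your abelian stratum, your $\abs{G^{ab}}=3$ stratum (where $d\mid 3+d^{2}$ forces $d=3$ and $G\cong\mfA_{4}$), your exclusion of perfect $G$ via $\abs{G}\le 42<60$, and the Landau bound itself are all correct.

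However, there is a genuine gap in the $\abs{G^{ab}}=2$ stratum: the claimed ``parity and divisibility check'' does not leave only $d_{3}=d_{4}=2$. The pair $(d_{3},d_{4})=(2,6)$ gives $\abs{G}=2+4+36=42$, which is even and divisible by both degrees, so it passes every test you state (it even sits exactly at your Landau bound). Enumerating all pairs with $2\le d_{3}\le d_{4}$ and $2+d_{3}^{2}+d_{4}^{2}\le 42$, the survivors of parity and divisibility are precisely $(2,2)$ and $(2,6)$, and the second must be excluded by actual group theory. One clean way: if $\abs{G}=42$ with four classes, Landau's relation with all centralizer orders dividing $42$ forces the unique decomposition $1=\tfrac{1}{2}+\tfrac{1}{3}+\tfrac{1}{7}+\tfrac{1}{42}$, so some element $x$ of order $7$ has $\abs{C_{G}(x)}=7$. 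Sylow counting makes the Sylow $7$-subgroup $P$ normal, $x\in P$, and $C_{G}(P)\subseteq C_{G}(x)=P$, so $C_{G}(P)=P$ and $G/P$ embeds into $\text{Aut}(\mbbZ_{7})\cong\mbbZ_{6}$; since $\abs{G/P}=6$ this gives $G/P\cong\mbbZ_{6}$, hence $G'\subseteq P$ and $\abs{G^{ab}}\ge 6$, contradicting $\abs{G^{ab}}=2$. With the order-$42$ case closed, your enumeration is complete and the proposition follows as you describe.
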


  Continuing onto the well understood setting of modular categories. We recall
  that much of the classification has been completed in \cite{RSW}. The omissions
  will be filled in and the classification completed in the following
  result.\footnote{The author would like to thank Eric Rowell for suggesting this
  approach.}
  \begin{proposition}
    \label{Rank4Modular}
    If $\mcC$ is a rank 4 modular category then it is Galois conjugate to a
    modular category from \cite{RSW} or has $S$--matrix
    \begin{align*}
    \(\begin{smallmatrix}
    1&-1&\bar{\t}&\t\\
    -1&1&-\t&-\bar{\t}\\
    \bar{\t}&-\t&-1&-1\\
    \t&-\bar{\t}&-1&-1
    \end{smallmatrix}\),
    \end{align*}
    where $\t=\frac{1+\sqrt{5}}{2}$ is the golden mean and
    $\bar{\t}=\frac{1-\sqrt{5}}{2}$ is its Galois conjugate.
  \end{proposition}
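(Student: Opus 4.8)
The plan is to identify the exceptional datum with a product of rank $2$ categories and then to show that this product represents the only Galois class of rank $4$ modular categories missing from \cite{RSW}. A direct computation shows that, after reindexing the two non-integral objects, the displayed matrix is the Kronecker product $\tilde{S}_{\Fib}\otimes\tilde{S}_{\overline{\Fib}}$ of the Fibonacci $S$--matrix with that of its Galois conjugate $\overline{\Fib}$, the non-pseudo-unitary (Yang--Lee) Fibonacci category whose nontrivial object has dimension $\bar{\t}$. Hence the matrix is realized by the Deligne product $\Fib\boxtimes\overline{\Fib}$, a genuine rank $4$ modular category with $D^{2}=5$, so the second alternative of the statement does occur; what remains is completeness.

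First I would verify that $\Fib\boxtimes\overline{\Fib}$ is not Galois conjugate to any category of \cite{RSW}. Modularity is preserved under Galois conjugation, so the Galois orbit of this category is $\lcb\Fib\boxtimes\overline{\Fib},\ \overline{\Fib}\boxtimes\Fib\rcb$, and every member contains a simple object of categorical dimension $\t\bar{\t}=-1$. Since $-1\in\mbbQ$ is fixed by $\Gal\(\overline{\mbbQ}/\mbbQ\)$, no member of this orbit is pseudo-unitary. Because the categories catalogued in \cite{RSW} are, up to Galois conjugacy, pseudo-unitary, this entire orbit is absent from their list, so $\Fib\boxtimes\overline{\Fib}$ is a bona fide exception.

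To prove it is the only one, I would argue as follows. By \eqref{PreVerlinde} the columns of $\tilde{S}$ are exactly the characters of $Gr\(\mcC\)$; the dimension vector $\(d_{a}\)$ is the character attached to the first column, while the $\FPdim$'s constitute the unique totally positive character. A rank $4$ modular category is Galois conjugate to a pseudo-unitary one -- hence to an entry of \cite{RSW} -- precisely when its dimension character and its Frobenius--Perron character lie in a common $\Gal\(\overline{\mbbQ}/\mbbQ\)$--orbit. I would therefore reduce to those $\mcC$ for which the two characters occupy distinct orbits, enumerate the finitely many fusion rings that can underlie a rank $4$ modular category (a purely ring--theoretic list already available through \cite{RSW}), and for each ring solve for the admissible modular data: the balancing relation \eqref{Balancing} recovers $\tilde{S}$ from the twists, Vafa's theorem together with the Gauss--sum identities \eqref{STCubed and STInvCubed} confines $T$ to finitely many root--of--unity assignments, and $\tilde{S}\tilde{S}^{\dagger}=D^{2}\mbbI$ from \eqref{SSdagger} pins down the normalization. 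The outcome should be that every ring other than that of $\Fib\boxtimes\Fib$ has all of its modular structures Galois conjugate to a pseudo-unitary one, whereas the $\Fib\boxtimes\Fib$ ring carries exactly two Galois classes, the pseudo-unitary class of $\Fib\boxtimes\Fib$ recorded in \cite{RSW} and the class of $\Fib\boxtimes\overline{\Fib}$ produced above.

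The main obstacle is this final, ring--by--ring verification rather than the construction of the exceptional datum. The delicate part is to show that no fusion ring besides that of $\Fib\boxtimes\Fib$ supports a modular structure whose dimension character escapes the Galois orbit of its Perron character, and dually that the $\Fib\boxtimes\Fib$ ring admits no modular data beyond the two classes named. This is a finite but intricate number--theoretic computation, and the care lies in checking that each branch of the twist equations coming from \eqref{Balancing} and \eqref{STCubed and STInvCubed}, subject to Vafa's root--of--unity constraint, either collapses into the \cite{RSW} list or reproduces the displayed $\tilde{S}$.
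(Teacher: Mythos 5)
Your opening two paragraphs are correct: the displayed matrix is the $S$--matrix of $\Fib\boxtimes\bar{\Fib}$, and your observation that every Galois conjugate of this category retains a simple object of categorical dimension $\t\bar{\t}=-1$ (rational, hence fixed by the Galois action) is a clean way to see that no conjugate can be pseudo-unitary, so the exception is genuinely absent from \cite{RSW}. This agrees with the paper's identification of the exceptional datum.

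The gap is in the completeness argument, which is the actual content of the proposition. Your plan hinges on enumerating ``the finitely many fusion rings that can underlie a rank 4 modular category (a purely ring--theoretic list already available through \cite{RSW}),'' but no such list was available, and invoking it is circular. The fusion-rule information in \cite{RSW} is not purely ring-theoretic: it is extracted from their Galois-theoretic analysis of the $S$--matrix, and for the Galois group $\langle(0,1)(2,3)\rangle$ that analysis was carried out only under the pseudo-unitarity hypothesis. Determining which fusion rings and modular data can arise in the one excluded configuration --- Galois group $\langle(0,1)(2,3)\rangle$ with $\mcC$ not pseudo-unitary --- is precisely what remains to be proven, so you cannot assume the ring list as input. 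The paper's proof starts from exactly this reduction: it writes down the constrained form of $\tilde{S}$ for that Galois group, splits on the signs $\e_{0},\e_{3}$, and then eliminates or identifies every branch using column orthogonality, the Verlinde formula, the balancing relation \eqref{Balancing}, Vafa's theorem together with the inverse Euler phi function (which pins the twists to $\pm i$ or primitive fifth roots of unity and hence the dimensions to $\lcb\pm1,\pm\t,\pm\bar{\t}\rcb$), and, in the $\e_{0}=-1$ case, an explicit finite enumeration of integer triples $\(n,r,s\)$ subject to the integrality tests of \cite{RSW}. Your proposal defers all of this to an unspecified ``finite but intricate number--theoretic computation'' whose outcome you only assert (``the outcome should be that\ldots''); as written it is a plausible program built on an unavailable premise, not a proof.
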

  \begin{proof}
    Using an argument due to V. Ostrik, \cite[Appendix A]{HR1}, it suffices to
    consider Galois groups such that the column of the S-matrix corresponding to
    the FP-dimension and the 0-column reside in distinct Galois orbits and
    neither are fixed. Since the Galois group of a rank 4 modular category is an
    abelian subgroup of $\mfS_{4}$, we see that, up to relabelling, the only
    Galois group that we need to consider is $\langle\(0,1\)\(2,3\)\rangle$.
    This is precisely case 5 of \cite{RSW}. Applying the standard Galois
    techniques present in \cite{RSW} leads to\footnote{Here we index from 0
    rather than 1 as in \cite{RSW}.}
    \begin{align*}
      \tilde{S}&=\(\begin{smallmatrix}
      1&d_{1}&d_{2}&d_{3}\\
      d_{1}&\e_{0}&\e_{3}d_{3}&\e_{0}\e_{3}d_{2}\\
      d_{2}&\e_{3}d_{3}&s_{22}&s_{23}\\
      d_{3}&\e_{0}\e_{3}d_{2}&s_{23}&\e_{0}s_{22}
      \end{smallmatrix}\).
    \end{align*}

    Where $\e_{j}^{2}=1$ for all $j$, $d_{j}$ are the categorical dimensions,
    and $s_{22}$ and $s_{23}$ are unknown S-matrix entries. Since $\e_{0}=\pm1$
    we consider these two cases separately.

    \textbf{Case 1:} $\e_{0}=1$.\ \\
      Orthogonality of the first two columns of $\tilde{S}$ gives
      $d_{1}=-\e_{3}d_{2}d_{3}$. Applying our Galois element to this equation
      gives that $\e_{3}=-1$. Next, orthogonality of the last column with the
      others gives us that $\tilde{s}_{23}\tilde{s}_{22}=-d_{2}d_{3}$ and $\tilde{s}_{22}=-1$ or
      $\tilde{s}_{22}=d_{3}^{2}$. We now examine these two subcases separately.

    \textbf{Case 1.1:} $s_{22}=d_{3}^{2}$\ \\
      Applying the orthogonality of the first and the fourth columns of the
      $S$--matrix we find that $d_{3}=\pm d_{2}$, we can apply the Verlinde
      formula and this relation to compute $N_{11}^{3}=d_{3}-\frac{1}{d_{3}}$ and
      so $d_{3}=\frac{n\pm\sqrt{4+n^{2}}}{2}$ for some $n\in\mbbN$. Examining the
      remaining $N_{1j}^{k}$ we find that either $n=0$ or $d_{2}=d_{3}$. However,
      if $n=0$, we have $d_{a}=\pm1$ for all $a$. Since rank 4 pointed modular
      categories have been classified we may assume $d_{2}=d_{3}$. Under this
      assumption the $S$--matrix takes the form
      \begin{equation*}
        \tilde{S}=\(\begin{smallmatrix}
        1&d_{3}^{2}&d_{3}&d_{3}\\
        d_{3}^{2}&1&-d_{3}&-d_{3}\\
        d_{3}&-d_{3}&d_{3}^{2}&-1\\
        d_{3}&-d_{3}&-1&d_{3}^{2}
        \end{smallmatrix}\).
      \end{equation*}

      Applying the balancing relation-- equation \eqref{Balancing}, and the
      Verlinde formula, we find\ \\
      $-1=\tilde{s}_{23}=\frac{\(n\pm\sqrt{4+n^{2}}\)^{2}\th_{1}}{4\th_{2}\th_{3}}$.
      Taking the modulus of both sides and recalling that $\abs{\th_{a}}=1$ gives
      the equation $4=\(n\pm\sqrt{4+n^{2}}\)^{2}$, whose only solution over
      $\mbbN$ is $n=0$ and so we have that $\mcC$ is pointed.

    \textbf{Case 1.2:} $\tilde{s}_{22}=-1$\ \\
      In this case, we apply the Verlinde formula to compute $N_{11}^{2}$ and
      $N_{11}^{3}$ which leads to\ \\ $d_{2}=\frac{1}{2}\(n\pm\sqrt{4+n^{2}}\)$
      and $d_{3}=\frac{1}{2}\(m\pm\sqrt{4+m^{2}}\)$ for some $m,n\in\mbbN$. The
      balancing equation for $\tilde{s}_{23}$ gives that $\th_{1}=\th_{2}\th_{3}$
      which then leads to 
      \begin{align*}
        d_{2}&=\pm\sqrt{\frac{-1+\th_{2}-\th_{2}^{2}}{\th_{2}}},\quad d_{3}=\pm\sqrt{\frac{-1+\th_{3}-\th_{3}^{2}}{\th_{3}}}
      \end{align*}

      by the balancing relation for $\tilde{s}_{22}$ and $\tilde{s}_{33}$.
      However, these results imply that $\th_{2}$ and $\th_{3}$ satisfy degree 4
      integral polynomials and are roots of unity. Applying the inverse Euler
      (totient) phi function, we see that $\th_{2},\th_{3}$ are $\pm i$ or
      primitive $5^{\text{th}}$ roots of unity and so $d_{2},d_{3}\in\lcb
      \pm1,\pm\t,\pm\bar{\t}\rcb$ where $\t$ is the golden mean
      $\frac{1}{2}\(1+\sqrt{5}\)$ and $\bar{\t}$ is its Galois conjugate.
      Simple computer search leads to $48$ $\(\tilde{S},T\)$ combinations. Twelve of the $S$--matrices
      are distinct with half of them Galois conjugate to the other half. Of these
      remaining six, two can be removed by relabeling. Thus, we have the following
      four $S$--matrices and their Galois conjugates:
      \begin{align*}
        \(\begin{smallmatrix}
        1&-1&\bar{\t}&\t\\
        -1&1&-\t&-\bar{\t}\\
        \bar{\t}&-\t&-1&-1\\
        \t&-\bar{\t}&-1&-1
        \end{smallmatrix}\)
        \quad
        \(\begin{smallmatrix}
        1&-1&-\t&\t\\
        -1&-1&-\t&-\t\\
        -\t&-\t&1&1\\
        \t&-\t&1&-1
        \end{smallmatrix}\)
        \quad
        \(\begin{smallmatrix}
        1&1&\t&\t\\
        1&-1&-\t&\t\\
        \t&-\t&1&-1\\
        \t&\t&-1&-1
        \end{smallmatrix}\)
        \quad
        \(\begin{smallmatrix}
        1&\t^{2}&\t&\t\\
        \t^{2}&1&-\t&-\t\\
        \t&-\t&-1&\t^{2}\\
        \t&-\t&\t^{2}&-1
        \end{smallmatrix}\).
      \end{align*}

      The second matrix can be discarded since there is no rank 2 modular category
      with $S$--matrix
      $\(\begin{smallmatrix}1&-1\\-1&-1\end{smallmatrix}\)$.\footnote{To see this,
      note that $N_{11}^{1}=0$ by dimension count and the other fusion coefficients
      are determined by equation \eqref{FusionSymmetries}. However, these fusion
      coefficients violate the Verlinde formula.} The last two matrices are
      pseudo-unitary and hence appear in \cite{RSW} which leaves only the first
      $S$--matrix which corresponds to $\Fib\boxtimes\bar{\Fib}$.

    \textbf{Case 2:} $\e_{0}=-1$\ \\
      By resolving the labeling ambiguity present between the 2 and 3 labels we
      can take $\e_{3}=1$. There are now two subcases

      \textbf{Case 2.1:} $\abs{d_{1}}\geq 1$\ \\
        Following the procedure of \cite{RSW}, we find that
        $d_{1}=\frac{1}{2}\(n\pm\sqrt{n^{2}+4}\)$ and $\exists a,b\in\mbbQ$ and
        $r,s\in\mbbZ$ such that 
        \begin{align*}
          r&=2b+an,\quad s=bn-2a,\quad\ \\
          d_{2}&=ad_{1}+b,\quad d_{3}=bd_{1}-a,\ \\
          D^{2}&=\(1+d_{1}^{2}\)\(1+a^{2}+b^{2}\).
        \end{align*} 

        Additionally, their techniques lead to $\abs{d_{1}}^{4}\leq
        1+5\abs{d_{1}}+8\abs{d_{1}}^{2}+5\abs{d_{1}}^{3}$. Coupling these results
        with $\abs{d_{1}}\geq1$ gives that $1\leq \abs{d_{1}}\leq \ps$, where
        $\ps$ is a root of $x^{4}-5x^{3}-8x^{2}-5x-1$, and is approximately given
        by $6.38048$. Thus $-7<d_{1}<7$. We also find that 
        \begin{equation}
          \label{eq: RSW Inequality}
          r^{2}+s^{2}\leq\(n^{2}+4\)\frac{4\abs{d_{1}}^{3}+5\abs{d_{1}}^{2}+4\abs{d_{1}}+1}{\abs{d_{1}}^{2}\(1+\abs{d_{1}}^{2}\)}.
        \end{equation}

        Given a bound on $d_{1}$ we now have a bound on a sum of squares of
        integers and hence we can exhaust all possibilities. To do this we proceed
        in two subcases:

        \textbf{Case 2.1.1:} $n>0$\ \\
          The fact that $d_{1}=\frac{1}{2}\(n+\sqrt{n^{2}+4}\)$ implies $1\leq
          n\leq 6$ and we have the case considered in \cite{RSW}. 

          In particular, we may apply inequality (\ref{eq: RSW Inequality}),
          these bounds for $n$, and our formula for $d_{1}$ to produce a list of
          triples $\(n,r,s\)$. Just as in \cite{RSW} we may enforce integrality
          of $d_{2}d_{3}/d_{1}$, $d_{3}/d_{2}-d_{2}/d_{3}$,
          $\tilde{s}_{22}/d_{2}+\tilde{s}_{23}/d_{3}$,
          $\tilde{s}_{23}/d_{2}-\tilde{s}_{22}/d_{3}$, and
          $\tilde{s}_{22}\tilde{s}_{23}/\(d_{2}d_{3}\)$. This leads to 24 possible
          triples $\(n,r,s\)$. The Verlinde formula provides enough integrality
          conditions to further reduce these 24 triples to $8$. Of these $8$,
          only $\(n,r,s\)=\(1,-2,-1\)$ or $\(1,2,1\)$ are compatible with the
          balancing equation and the twists being roots of unity.
          In these cases one finds, $d_{1}=\t$, $d_{3}=\pm\t$ and $d_{2}=\pm1$. 
          However, these lead to relabelings of the $S$--matrices from case 1.

        \textbf{Case 2.1.2:} $n<0$\ \\
          Proceeding as in case 2.1.1, we find, by computer search, that there are 446 possible triples
          $\(n,r,s\)$ of which only 24 pass the integrality tests of \cite{RSW}.
          Applying the Verlinde formula to determine the fusion rules in these
          cases, we find that all of these either violate the integrality or
          non-negativity of the fusion coefficients.

      \textbf{Case 2.2:} $\abs{d_{1}}<1$\ \\
        Applying our Galois element, we see that $\s\(d_{1}\)=-\frac{1}{d_{1}}$.
        Setting $\d_{a}=\s\(d_{a}\)$, we find a category $\hat{\mcC}$, which is
        Galois conjugate to $\mcC$; whence if $\hat{\mcC}$ does not exist, then
        neither does $\mcC$. However, $\abs{\d_{1}}>1$ and, since Galois
        conjugation preserves all categorical identities used in case 2.1, we see
        that we must have $\d_{3}=\d_{2}\d_{1}$, $\d_{2}=\pm1$ and $\d_{1}=\t$.
        However, this is the same conclusion as in case 2.1.1. Ergo, $\mcC$ must
        be Galois conjugate to one of the case 2.1.1 results. Since these were
        conjugate to the categories determined in \cite{RSW}, we can conclude that
        $\mcC$ has an $S$--matrix Galois conjugate to one appearing in case 1. 
  \end{proof}

  Having dispensed with the symmetric and modular cases, we find that it is
  useful to stratify the properly premodular categories by self-duality and symmetric
  subcategory. It is known that that every properly premodular category has a
  symmetric subcategory \cite{M4}. Since the rank has been fixed the
  possible symmetric subcategories can be completely determined.
  \begin{proposition}
    \label{FourCases}
    \label{SymmetricSubcategory}
    If $\mcC$ is a rank 4 non-pointed properly premodular category, then there are four cases:
    \begin{enumerate}
      \item $\mcC$ is self-dual and has a symmetric subcategory Grothendieck equivalent
        to $\Rep\(\mfS_{3}\)$. 
      \item $X_{1}^{*}=X_{2}$ and generate a symmetric subcategory of $\mcC$
        Grothendieck equivalent to $\Rep\(\mbbZ/3\mbbZ\)$. 
      \item $\mcC$ is self-dual and has a symmetric subcategory Grothendieck equivalent
        to $\Rep\(\mbbZ/2\mbbZ\)$. 
      \item $\mbbI$ and $X_{1}$ generate a symmetric subcategory of $\mcC$
        Grothendieck equivalent to $\Rep\(\mbbZ/2\mbbZ\)$. Moreover, $X_{2}^{*}=X_{3}$.
    \end{enumerate}

    In each case, the symmetric subcategory is the M\"{u}ger center.
  \end{proposition}
  \begin{proof}
    We know from \cite{M4} Corollary 2.16 and comments in the
    introduction,\footnote{$\mcC'=\mcZ_{2}\(\mcC\)$ is a canonical full symmetric
    subcategory of $\mcC$.} that since $\mcC$ is nonsymmetric and nonmodular, then
    it must have a nontrivial symmetric subcategory of rank 2 or 3. Rank 3
    symmetric subcategories are known to be Grothendieck equivalent to $\Rep\(\mbbZ/3\mbbZ\)$ or
    $\Rep\(\mfS_{3}\)$ \cite{O4}. Rank 2 proceeds similarly and leads to
    $\Rep\(\mbbZ/2\mbbZ\)$. 

    In the rank 3 case, we take $X_{0}=\mbbI$, $X_{1}$, and $X_{2}$ to be
    representatives of distinct simple isomorphism classes that generate the
    symmetric subcategory, while, in rank 2, we take $X_{0}=\mbbI$ and $X_{1}$ to be the
    representative generators. The result then follows immediately by standard
    representation theory. 
  \end{proof}

  Classification of the properly premodular categories now proceeds by cases.
  The categories with high rank symmetric subcategories are, perhaps not
  surprisingly, easier to deal with since more of the datum is predetermined. As
  such, we will proceed through $\Rep\(\mfS_{3}\)$ and $\Rep\(\mbbZ/3\mbbZ\)$ first
  and then discuss the $\Rep\(\mbbZ/2\mbbZ\)$ cases.
  \begin{proposition}
    There is no rank 4 non-pointed properly premodular category with $\mcC'$
    Grothendieck equivalent to $\Rep\(\mfS_{3}\)$. 
  \end{proposition}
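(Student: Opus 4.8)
The plan is to exploit the fact that when $\mcC' \cong \Rep(\mfS_3)$, the symmetric subcategory already accounts for three of the four simple objects. By \propref{SymmetricSubcategory}, case 1, we may take $X_0 = \mbbI$, $X_1$, $X_2$ to generate $\Rep(\mfS_3)$ with the corresponding dimensions $d_0 = 1$, $d_1 = 1$, $d_2 = 2$ (the sign, trivial, and standard representations), and $\mcC$ is self-dual so $X_3 = X_3^*$. The single remaining simple object $X_3$ is then heavily constrained: since $\mcC'$ is a full subcategory, $X_3 \notin \mcC'$, so the transparency of the objects in $\Rep(\mfS_3)$ forces strong relations through the pre-Verlinde formula \eqref{PreVerlinde} and the balancing relation \eqref{Balancing}.

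First I would record the $S$-matrix structure. The columns indexed by objects in $\mcC'$ are multiples of the first column (by the degeneracy statement in the M\"uger center discussion), so $\tilde{s}_{3\g} = d_3 d_\g$ for $\g \in \{0,1,2\}$. This pins down three of the four entries of the last column of $\tilde S$, leaving only $\tilde{s}_{33}$ and $d_3$ undetermined. Next I would fuse $X_3$ with itself: the fusion coefficients $N_{33}^c$ for $c \in \{0,1,2,3\}$ are governed by the $\Rep(\mfS_3)$ fusion rules acting on $X_3$ together with the constraint that $X_1 \otimes X_3$ and $X_2 \otimes X_3$ must decompose into simple objects of $\mcC$. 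Since there is only one object outside $\mcC'$, associativity and the symmetries \eqref{FusionSymmetries} will force $X_3$ to be fixed (up to the action of $\mcC'$) under tensoring by $X_1$ and $X_2$, producing a small finite list of candidate fusion rings.

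The key step is then to compute $d_3$ using pre-Verlinde \eqref{PreVerlinde}. Setting $a = b = c = 3$ gives $\tilde{s}_{33}^2 = d_3 \sum_\ell N_{33}^\ell \tilde{s}_{3\ell}$, and substituting the known entries $\tilde{s}_{3\g} = d_3 d_\g$ yields a polynomial relation in $d_3$ and $\tilde{s}_{33}$. Combining this with the balancing relation \eqref{Balancing} for $\tilde{s}_{33}$ — which expresses $\tilde{s}_{33}$ in terms of the twists $\th_c$ and dimensions $d_c$ appearing in $X_3 \otimes X_3$ — reduces the problem to a finite Diophantine-type system. I expect the outcome to be that the resulting value of $d_3$ is either forced to make $\mcC$ modular (contradicting proper premodularity, since $X_3$ would be transparent or the $S$-matrix would become nondegenerate) or to produce a dimension that is not a genuine Frobenius-Perron dimension of any fusion ring, or to violate the integrality of fusion coefficients.

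The main obstacle I anticipate is controlling the twist $\th_3$ and the self-braiding entry $\tilde{s}_{33}$ simultaneously: the balancing relation couples these to the twists of the objects in $\mcC'$ (which are $\pm 1$ by the argument preceding \corref{Modular FS is Algebraic Integer}), but $\th_3$ itself is only constrained to be a root of unity by Vafa's theorem. The cleanest route is probably to show that the $S$-matrix one is forced into is degenerate in a way incompatible with $\mcC' \cong \Rep(\mfS_3)$ having exactly rank $3$ — i.e. that the candidate data either enlarges the M\"uger center beyond $\Rep(\mfS_3)$ or collapses it, so that no genuinely properly premodular, non-pointed category survives. Establishing that dichotomy rigorously, rather than merely ruling out individual numerical candidates, is where the real work lies.
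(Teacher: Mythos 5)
Your reduction is sound as far as it goes, and it runs parallel to the paper's own first half: transparency of $X_0,X_1,X_2$ forces $\tilde{s}_{3\gamma}=d_3d_\gamma$ for $\gamma\in\{0,1,2\}$ (via \cite[Proposition 2.5]{Muger1}), the $\Rep(\mfS_3)$ structure together with \eqref{FusionSymmetries} and dimension counts leaves $M=N_{33}^{3}$ as the only free fusion parameter, and the eigenvalue property of the last column of $\tilde S$ (equivalently \eqref{PreVerlinde}) combined with \eqref{Balancing} and the inverse Euler phi bound on the root of unity $\th_3$ collapses everything to $M=0$, $\th_1=\th_2=1$, $\th_3=\pm1$, $d_3=\pm\sqrt{6}$. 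The genuine gap is in what you expect to happen next. This surviving datum is \emph{not} killed by any of the contradictions you predict: the fusion ring $X_3\otimes X_3=\mbbI\oplus X_1\oplus 2X_2$, $X_1\otimes X_3=X_3$, $X_2\otimes X_3=2X_3$ has nonnegative integer structure constants with FP-dimension $\sqrt{6}$ for $X_3$, and one checks directly that \eqref{PreVerlinde} is satisfied ($\tilde{s}_{33}^{2}=36=d_3\cdot 6d_3$), that \eqref{Balancing} is satisfied ($\tilde{s}_{33}=\th_3^{-2}(1+1+4)=6$), and that nothing forces modularity. So the Diophantine analysis terminates in a numerically consistent premodular datum, and a genuinely categorical argument --- not more numerics --- is required to exclude it. Your proposal stops exactly at that point: you describe the needed dichotomy (the M\"{u}ger center must enlarge or collapse) but explicitly defer its proof, and that deferred step is the entire content of the proposition once the datum is pinned down.

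For completeness, there are two ways to finish. The paper's way: $\mcC'\cong\Rep(\mfS_3)$ is Tannakian, so one may de-equivariantize; since $\dim(\mcC_{\mfS_3})=\frac{1}{6}\dim(\mcC)=2$, \cite[Corollary 8.30]{ENO} forces $\mcC_{\mfS_3}$ to be pointed, whence $\mcC$ is group-theoretical and in particular integral \cite{NNW}\cite{DGNO}, contradicting $d_3=\pm\sqrt{6}$. Alternatively, your ``enlarged center'' idea can be made rigorous in one line from the ribbon identity $c_{Y,X}c_{X,Y}=\theta_{X\otimes Y}\circ(\theta_X\otimes\theta_Y)^{-1}$: the monodromy of $X_3$ with any simple object acts on each $X_c$-isotypic component by the scalar $\th_c/(\th_a\th_b)$, and in the surviving datum every such ratio equals $1$ (all of $\th_0,\th_1,\th_2$ are $1$, $\th_3^{2}=1$, and $M=0$ means there is no $X_3$-channel in $X_3^{\otimes2}$). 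Hence $X_3$ is transparent, so $\mcC'=\mcC$ is symmetric --- contradicting proper premodularity, or, if you prefer, contradicting $d_3=\pm\sqrt{6}$ since symmetric categories are integral by \cite{Deligne}. Either finish works, and the second is arguably simpler than the paper's; but your write-up contains neither, and the step you label ``the real work'' is in fact the whole proof.
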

  \begin{proof}
    Applying the known representation theory of $\mfS_{3}$, equation
    \eqref{FusionSymmetries} and dimension counts, we find
    \begin{equation*}
      N_{1}=\(\begin{smallmatrix}
      0 & 1 & 0 & 0\\
      1 & 0 & 0 & 0\\
      0 & 0 & 1 & 0\\
      0 & 0 & 0 & 1
      \end{smallmatrix}\)
      \quad
      N_{2}=\(\begin{smallmatrix}
      0 & 0 & 1 & 0\\
      0 & 0 & 1 & 0\\
      1 & 1 & 1 & 0\\
      0 & 0 & 0 & 2
      \end{smallmatrix}\)
      \quad
      N_{3}=\(\begin{smallmatrix}
      0 & 0 & 0 & 1\\
      0 & 0 & 0 & 1\\
      0 & 0 & 0 & 2\\
      1 & 1 & 2 & M
      \end{smallmatrix}\).
    \end{equation*}

    Recall that $\tilde{s}_{ab}=d_{a}d_{b}$ for $0\leq a,b \leq2$ by
    \cite[Proposition 2.5]{M4}. Coupling this with equation
    \eqref{Balancing}, we find $\theta_{1}=\theta_{2}=1$. Denoting $\theta_{3}$
    by $\theta$, this gives 
    \begin{equation*}
      \tilde{S}=\(\begin{smallmatrix}
      1 & 1 & 2 & \frac{M\pm\sqrt{24+M^{2}}}{2}\\
      1 & 1 & 2 & \frac{M\pm\sqrt{24+M^{2}}}{2}\\
      2 & 2 & 4 & M\pm\sqrt{24+M^{2}}\\
      \frac{M\pm\sqrt{24+M^{2}}}{2} & \frac{M\pm\sqrt{24+M^{2}}}{2} & M\pm\sqrt{24+M^{2}} & \frac{12+\(M\pm\sqrt{24+M^{2}}\)M\theta}{2\theta^{2}}
      \end{smallmatrix}\).
    \end{equation*}

    Since $\frac{\tilde{s}_{33}}{\tilde{s}_{03}}$ must satisfy the
    characteristic polynomial of $N_{3}$, we can deduce that $\th$ must be a
    primitive root of unity satisfying a degree integral 3 polynomial. Employing
    the inverse of Euler's totient function, we find that $\th=\pm1$ and $M=0$. Thus
    $d=\pm\sqrt{6}$. Having removed the free parameters from this datum, we are
    in a position to prove that such a category cannot exist. In this case the M\"{u}ger
    center, $\Rep\(\mfS_{3}\)$, constitutes a Tannakian
    subcategory of $\mcC$. By
    \cite{NNW1} and \cite[Remark 5.10]{DGNO1}, we can form the
    de-equivariantization, $\mcC_{\mfS_{3}}$, which is a braided
    $\mfS_{3}$-crossed fusion category. However,
    $\FPdim\left(\mcC_{\mfS_{3}}\right)=\frac{1}{6}\FPdim\left(\mcC\right)$,
    $\dim\left(\mcC_{\mfS_{3}}\right)=\frac{1}{6}\dim\left(\mcC\right)=2$, and
    $\FPdim\left(\mcC_{\mfS_{3}}\right)=2$ \cite{DGNO1}. Thus $\mcC_{\mfS_{3}}$ is
    weakly integral braided $\mfS_{3}$-crossed fusion category and we may apply
    \cite[Corollary 8.30]{ENO1} to deduce that $\mcC_{\mfS_{3}}$ is equivalent to
    $\Rep\(\mbbZ/2\mbbZ\)$ and hence pointed. Consequently, $\mcC$ is
    group-theoretical and in particular integral, contradicting $d=\pm\sqrt{6}$
    \cite{NNW1}\cite{DGNO1}. 
  \end{proof}

  \begin{proposition}
    \label{RepZ3}
    If $\mcC$ is a non-pointed properly premodular category such that $\langle
    X_{0},X_{1},X_{2}\rangle=\mcC'$ is Grothendieck equivalent to $\Rep\(\mbbZ/3\mbbZ\)$, then:
    \begin{align*}
      \tilde{S}=\(\begin{smallmatrix}
      1 & 1 & 1 & 3\\
      1 & 1 & 1 & 3\\
      1 & 1 & 1 & 3\\
      3 & 3 & 3 & -3
      \end{smallmatrix}\)
      \quad
      T = \(\begin{smallmatrix}
      1 & 0 & 0 & 0\\
      0 & 1 & 0 & 0\\
      0 & 0 & 1 & 0\\
      0 & 0 & 0 & -1
      \end{smallmatrix}\)\ \\
      N_{1}=\(\begin{smallmatrix}
      0 & 1 & 0 & 0\\
      0 & 0 & 1 & 0\\
      1 & 0 & 0 & 0\\
      0 & 0 & 0 & 1
      \end{smallmatrix}\)
      \quad
      N_{2}=\(\begin{smallmatrix}
      0 & 0 & 1 & 0\\
      1 & 0 & 0 & 0\\
      0 & 1 & 0 & 0\\
      0 & 0 & 0 & 1
      \end{smallmatrix}\)
      \quad
      N_{3}=\(\begin{smallmatrix}
      0 & 0 & 0 & 1\\
      0 & 0 & 0 & 1\\
      0 & 0 & 0 & 1\\
      1 & 1 & 1 & 2
      \end{smallmatrix}\),
    \end{align*}
    and $\mcC$ is realized by $\mcC\(\mathfrak{sl}\(2\),6\)_{ad}$.
  \end{proposition}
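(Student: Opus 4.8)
The plan is to extract the entire premodular datum from the hypothesis and then recognize it as that of $\mcC(sl(2),6)_{ad}$. First I would record the consequences of $\mcC'\cong\Rep(\mbbZ_3)$: the central objects $X_0,X_1,X_2$ are invertible with $d_0=d_1=d_2=1$ and carry the $\mbbZ_3$ fusion rules, while \propref{FourCases} gives $X_1^*=X_2$ and forces $X_3$ to be self-dual. Since $\mcC$ is non-pointed, $X_3$ is the unique non-invertible simple, so each invertible fixes it, $X_i\otimes X_3\cong X_3$ for $i=0,1,2$; this determines $N_1$ and $N_2$ and shows $X_3\otimes X_3$ is invariant under tensoring by $\mbbZ_3$. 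Because self-duality forces $N_{33}^0=1$ through \eqref{FusionSymmetries}, the $\mbbZ_3$-invariance promotes this to $N_{33}^0=N_{33}^1=N_{33}^2=1$, leaving exactly one unknown fusion coefficient $M:=N_{33}^3$ and one unknown twist $\th_3$.

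Next I would pin down the $S$-matrix. Transparency of the center gives $\tilde s_{ab}=d_ad_b$ whenever $a\in\{0,1,2\}$, so the only undetermined entries are $d_3=\tilde s_{a3}$ (for $a\leq 2$) and $\tilde s_{33}$. Feeding the $\mbbZ_3$-rules into the balancing relation \eqref{Balancing} for $\tilde s_{11}$ and $\tilde s_{12}$ yields $\th_0=\th_1=\th_2=1$ (as expected for a Tannakian center), while balancing for $\tilde s_{33}$ expresses $\tilde s_{33}$ in terms of $\th_3$, $M$, and $d_3$. From \eqref{PreVerlinde} the row $(d_3,d_3,d_3,\tilde s_{33})$ is an eigenvector of $N_3$, so $\tilde s_{33}/d_3$ is a root of the integral quadratic $x^2-Mx-3$, the nontrivial factor of the characteristic polynomial of $N_3$; the same quadratic has $d_3$ as a root. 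Combining this with the balancing expression turns the problem into a quadratic in $u:=\th_3^{-1}$, subject to the constraint, from Vafa's theorem, that $u$ is a root of unity. Then $2\,\mathrm{Re}(u)$ must be an algebraic integer lying in $[-2,2]$, which bounds $M$ to a short list; proper premodularity discards the degenerate solution $\th_3=1$ (which would give $\tilde s_{33}=d_3^2$ and a rank-one $S$-matrix, hence $\mcC=\mcC'$), and the algebraic-integer requirement eliminates the intermediate value of $M$.

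The step I expect to be the main obstacle is excluding the surviving phantom solution $M=0$, $\th_3=\pm i$, $d_3=\sqrt3$, which passes every test above. Here the $2^{\text{nd}}$ Frobenius-Schur indicator is decisive: by \corref{Modular FS is Algebraic Integer} the sum $\frac{1}{D^2}\sum_{b,c}N_{bc}^3 d_bd_c(\th_b/\th_c)^2$ must be a rational integer, yet for this datum it evaluates to $-\sqrt3$, a contradiction. This leaves only $M=2$, $\th_3=-1$, $d_3=3$, and $\tilde s_{33}=-3$, which reproduce exactly the matrices $\tilde S$, $T$, $N_1$, $N_2$, $N_3$ in the statement.

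Finally, I would address realizability and uniqueness. One verifies directly that $\mcC(sl(2),6)_{ad}$ is a rank $4$ properly premodular category whose $S$-matrix, twists, and fusion matrices coincide with those just derived (consistent with the earlier observation that $d_3^2=9$ does not divide $D^2=12$), so the datum is realized. Uniqueness then follows from Ocneanu rigidity, which yields only finitely many braided fusion categories with a fixed fusion ring; since the fusion ring, the twists, and the braiding eigenvalues are here completely pinned down, $\mcC$ must be equivalent to $\mcC(sl(2),6)_{ad}$.
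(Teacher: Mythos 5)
Your proposal is correct and follows essentially the same route as the paper: derive the fusion matrices up to $M=N_{33}^{3}$, use transparency of the center and the balancing relation \eqref{Balancing} together with the eigenvector property \eqref{PreVerlinde} and the root-of-unity constraint on $\th_{3}$ to reduce to $M\in\lcb0,2\rcb$, and then invoke \corref{Modular FS is Algebraic Integer} to kill the Tambara--Yamagami phantom $\(M,\th_{3},d_{3}\)=\(0,\pm i,\pm\sqrt{3}\)$ --- exactly the paper's decisive step. The only differences are cosmetic: you organize the cyclotomic elimination via the quadratic factor $x^{2}-Mx-3$ and the bound $2\,\mathrm{Re}\(\th_{3}^{-1}\)\in[-2,2]$, where the paper derives a degree-6 integral polynomial and applies the inverse Euler phi function, and you make explicit the value $-\sqrt{3}$ that the paper leaves implicit.
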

  \begin{proof}
    Applying \propref{FourCases}, we know that $\mcC$ is self-dual and so
    applying the representation theory of $\mbbZ/3\mbbZ$ and equation
    \eqref{FusionSymmetries}, we find that the fusion matrices are determined up
    to $N_{33}^{3}$. Making use of equation \eqref{Balancing}, the fact that
    $\tilde{S}=\tilde{S}^{T}$, and the fact that in a properly premodular
    category some column of $\tilde{S}$ is a multiple of the first, one finds
    that
    \begin{equation*}
      \tilde{S}=\(\begin{smallmatrix}
      1&1&1&d_{3}\\
      1&1&1&d_{3}\\
      1&1&1&d_{3}\\
      d_{3}&d_{3}&d_{3}&\frac{3+d_{3}N_{33}^{3}\th_{3}}{\th_{3}^{2}}
      \end{smallmatrix}\)
      \quad 
      T=\(\begin{smallmatrix}
      1&0&0&0\\
      0&1&0&0\\
      0&0&1&0\\
      0&0&0&\th_{3}
      \end{smallmatrix}\).
    \end{equation*}

    By dimension count, we see that
    $d_{3}=\frac{1}{2}\(N_{33}^{3}\pm\sqrt{12+N_{33}^{3}}\)$. So it remains to
    determine $N_{33}^{3}$ and $\th_{3}$. For notational brevity, we let
    $M=N_{33}^{3}$. Applying equation \eqref{PreVerlinde} we find that 
    \begin{equation}
      \label{RepZ3 theta equation}
      \begin{split}
        (\th_{3}-1) \(18 \th_{3} \(\th_{3}^2+\th_{3}+1\)+\th_{3}^2 M^4+3 \th_{3} (\th_{3}+1) (\th_{3}+2)M^2+18\)\\
        =\pm(\th_{3}-1) \(3 \th_{3} \(\th_{3}^2+\th_{3}+2\) \sqrt{M^2+12} M+\th_{3}^2 \sqrt{M^2+12} M^3\).
      \end{split}
    \end{equation}

    We first note that if $\th_{3}=1$, then $\mcC=\mcC'$ contradicting the
    nonsymmetric assumption. Thus, $\th_{3}$ satisfies a degree 6 integral
    polynomial. However, $\th_{3}$ is a root of unity, so applying the inverse
    Euler phi function to determine a list of potential values for $\th_{3}$.
    Combing the possible cases, one finds $N_{33}^{3}\in\lcb0,2\rcb$ and
    $\th_{3}\in\lcb\pm i,-1\rcb$. Applying \corref{Modular FS is Algebraic
    Integer} with $a=3$, we find that only $N_{33}^{3}=2$ gives a rational
    integer. Evaluating equation \eqref{RepZ3 theta equation} at $N_{33}^{3}=2$
    reveals that $\th=-1$ is the only solution.\footnote{If one proceeds without
    appealing to the Frobenius-Schur indicators then the Tambara-Yamagami with
    dimensions $1,1,1,\sqrt{3}$ appear. This can of course be excluded since such
    categories do not admit a braiding \cite{S1}} 
  \end{proof}

  Having dispensed with the large symmetric subcategories, we need to consider
  the case that $\Rep\(\mbbZ/2\mbbZ\)$ appears as a symmetric subcategory. We first
  consider the non-self-dual case which can be dealt with by cyclotomic/number
  theoretic techniques.
  \begin{proposition}
    There is no rank 4 non-pointed properly premodular category such that
    $\langle X_{0},X_{1}\rangle=\mcC'$ is Grothendieck equivalent to $\Rep\(\mbbZ/2\mbbZ\)$, and
    $X_{2}^{*}=X_{3}$.
  \end{proposition}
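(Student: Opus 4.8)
The plan is to determine the fusion ring completely from the hypotheses and then obstruct the twist of $X_3$ by a cyclotomic argument. Write $X_0=\mbbI$, let $X_1$ be the invertible generator of $\mcC'\cong\Rep(\mbbZ_2)$, and let $X_2,X_3$ be the dual pair $X_2^*=X_3$. Since $\mcC'$ is Tannakian we have $\th_0=\th_1=1$ and $d_1=1$, while duality gives $d_2=d_3=:d$ and $\th_2=\th_3=:\th$; non-pointedness forces $d>1$. Because $X_1$ is invertible, $X_1\otimes X_2$ is simple of dimension $d$, so either $X_1\otimes X_2=X_2$ (Case A) or $X_1\otimes X_2=X_3$ (Case B), and I would treat these in turn. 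First I would pin down the coefficients among $X_2,X_3$ using \eqref{FusionSymmetries} together with the self-duality of $X_2\otimes X_3$. In Case A this yields $X_2\otimes X_2=pX_2\oplus qX_3$ and $X_2\otimes X_3=\mbbI\oplus X_1\oplus pX_2\oplus pX_3$ for nonnegative integers $p,q$; the two resulting dimension identities $d^2=(p+q)d$ and $d^2=2+2pd$ give $q^2-p^2=2$, which is impossible because $q-p$ and $q+p$ have equal parity. Hence Case A does not occur.

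In Case B the same bookkeeping gives $X_2\otimes X_2=X_1\oplus pX_2\oplus pX_3$ and $X_2\otimes X_3=\mbbI\oplus pX_2\oplus pX_3$, so $d^2=1+2pd$ with $p\geq1$, and the entire ring is determined by $p$. Feeding these coefficients into the balancing relation \eqref{Balancing} produces $\tilde s_{22}=\tilde s_{23}=\th^{-2}(1+2p\th d)$, while the symmetry \eqref{SMatrixSymmetries} forces $\tilde s_{23}=\overline{\tilde s_{22}}$, so this common value $\a:=\tilde s_{22}$ is real. Applying the premodular Verlinde identity \eqref{PreVerlinde} with $a=b=c=2$ gives $\a^2=d^2+2pd\,\a$, whose roots are $\a=d^2$ and $\a=-1$. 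The value $\a=d^2$ would make the $X_2$-column of $\tilde S$ equal to $d$ times the first column, i.e. $X_2\in\mcC'$, contradicting $\mcC'\cong\Rep(\mbbZ_2)$; therefore $\a=-1$.

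To finish, I would substitute $\a=-1$ into $\a=\th^{-2}(1+2p\th d)$ and clear denominators to obtain $\th^2+2pd\,\th+1=0$, hence $\th+\th^{-1}=-2pd$. Since $\th$ is a root of unity by Vafa's theorem, $\th+\th^{-1}=2\cos(\arg\th)\in[-2,2]$, whereas $-2pd<-2$ because $p\geq1$ and $d>1$. This contradiction eliminates Case B, so no such category exists.

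I expect the main obstacle to be the fusion-coefficient bookkeeping: carefully combining \eqref{FusionSymmetries} with the self-duality of $X_2\otimes X_3$ to collapse everything to the single parameter $p$, and verifying rigorously that Case A is vacuous. Once the ring is fixed, the balancing and Verlinde inputs are routine and the cyclotomic finish (a root of unity cannot satisfy $\th+\th^{-1}<-2$) is immediate.
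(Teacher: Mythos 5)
Your fusion-ring bookkeeping is correct and lines up exactly with the paper's: your Case A is the paper's case $N_{32}^{1}=1$, $N_{33}^{1}=0$ (both terminate in the equation $q^{2}-p^{2}=2$, impossible by parity/reduction mod $4$), and your Case B is the paper's case $N_{32}^{1}=0$, $N_{33}^{1}=1$ with your $p$ equal to the paper's $M$. In Case B your route is genuinely different from the paper's: you use the premodular Verlinde identity \eqref{PreVerlinde} at $(a,b,c)=(2,2,2)$, the factorization $(\alpha-d^{2})(\alpha+1)=0$, and M\"{u}ger's transparency criterion to force $\alpha=-1$, whereas the paper substitutes $\tilde{s}_{22}/\tilde{s}_{02}$ into the characteristic polynomial of $N_{2}$ and enumerates admissible roots of unity via the inverse Euler phi function. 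Your reduction to the single relation $\theta+\theta^{-1}=-2pd$ is cleaner and more conceptual than that enumeration.

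However, the last step has a genuine gap. You assert at the outset that ``non-pointedness forces $d>1$,'' and the final contradiction rests on $-2pd<-2$. But $d=\tilde{s}_{02}$ is the \emph{categorical} dimension, not the Frobenius--Perron dimension, and nothing in the hypotheses makes $\mcC$ pseudo-unitary; this paper is explicitly careful about that distinction (its modular and properly premodular classifications must track Galois conjugates with negative dimensions). From $d^{2}=1+2pd$ the two possibilities are $d=p+\sqrt{p^{2}+1}>1$ and $d=p-\sqrt{p^{2}+1}\in(-1,0)$; non-pointedness only forces $\FPdim(X_{2})>1$ and does not decide which root $d$ is. If $d=p-\sqrt{p^{2}+1}$, then $-2pd=2p/\bigl(p+\sqrt{p^{2}+1}\bigr)\in(0,1)\subset[-2,2]$, and your inequality yields no contradiction at all. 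The gap is fixable with one more idea: apply a Galois automorphism $\sigma$ of $\bar{\mbbQ}$ sending $\sqrt{p^{2}+1}\mapsto-\sqrt{p^{2}+1}$ to the identity $\theta+\theta^{-1}=-2pd$. Since $\sigma$ carries roots of unity to roots of unity, $\bigl|\sigma(\theta)+\sigma(\theta)^{-1}\bigr|\leq2$, while $\bigl|2p\,\sigma(d)\bigr|=2p\bigl(p+\sqrt{p^{2}+1}\bigr)\geq2(1+\sqrt{2})>2$, a contradiction; together with your direct argument for $d>1$ this closes Case B. As written, though, your proof silently assumes pseudo-unitarity, which is precisely the assumption one is not entitled to here.
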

  \begin{proof}
    Given the standard representation theory of $\mbbZ/2\mbbZ$ and the equation
    \eqref{FusionSymmetries}, we immediately obtain:
    \begin{equation*}
      N_{1}=\(\begin{smallmatrix}
      0 & 1 & 0 & 0\\
      1 & 0 & 0 & 0\\
      0 & 0 & N_{32}^{1} & N_{33}^{1}\\
      0 & 0 & N_{33}^{1} & N_{32}^{1}
      \end{smallmatrix}\)
      \quad
      N_{2}=\(\begin{smallmatrix}
      0 & 0 & 1 & 0\\
      0 & 0 & N_{32}^{1} & N_{33}^{1}\\
      0 & N_{33}^{1} & N_{33}^{3} & N_{33}^{2}\\
      1 & N_{32}^{1} & N_{33}^{3} & N_{33}^{3}
      \end{smallmatrix}\)
      \quad
      N_{3}=\(\begin{smallmatrix}
      0 & 0 & 0 & 1\\
      0 & 0 & N_{33}^{1} & N_{32}^{1}\\
      1 & N_{32}^{1} & N_{33}^{3} & N_{33}^{3}\\
      0 & N_{33}^{1} & N_{33}^{2} & N_{33}^{3}
      \end{smallmatrix}\).
    \end{equation*}
    Demanding that the fusion matrices mutually commute reveals that either
    $N_{32}^{1}$ or $N_{33}^{1}$ is 0 and the other is 1. Hence, the proof
    bifurcates into two cases.

    \textbf{Case 1:} $N_{32}^{1}=1$ and $N_{33}^{1}=0$\\
      Returning to the commutativity of the fusion matrices, we are reduced to one
      equation:
      \begin{equation*}
        2= \(N_{33}^{2}\)^{2}-\(N_{33}^{3}\)^{2}=\(N_{33}^{2}-N_{33}^{3}\)\(N_{33}^{2}+N_{33}^{3}\)
      \end{equation*}

      Of course the fusion coefficients are non-negative integers and so
      $N_{33}^{2}-N_{33}^{3}=1$ and $N_{33}^{2}+N_{33}^{3}=2$. Of course this
      system has no solution in $\mbbZ$.

    \textbf{Case 2:} $N_{32}^{1}=0$ and $N_{33}^{1}=1$.\\
      In this case the commutativity of the fusion matrices reveals that
      $N_{33}^{2}=N_{33}^{3}$, which we will simply call $M$ for brevity. Applying
      the equation \eqref{Balancing}, and dimension count, we can determine the
      $S$--matrix to be
      \begin{equation*}
        \tilde{S}=\(\begin{smallmatrix}
        1 & M\pm\sqrt{1+M^{2}}\\
        M\pm\sqrt{1+M^{2}} & \frac{1+2\(M\pm\sqrt{1+M^{2}}\)M\th}{\th^{2}}
        \end{smallmatrix}\)\otimes\(\begin{smallmatrix}
        1 & 1\\
        1 & 1
        \end{smallmatrix}\).
      \end{equation*}
      Where $\th:=\theta_{2}=\theta_{3}$ and $\th_{1}=1$, which follows from the
      fact that some column of the $S$--matrix must be a multiple of the first
      \cite{Brug1}. However, $\frac{\tilde{s}_{22}}{\tilde{s}_{02}}$ must
      satisfy the characteristic polynomial of $N_{2}$, which factors into two
      quadratics. Inserting this quotient into the factors, we find that $\th$
      must satisfy either a degree 4 or degree 8 polynomial over $\mbbZ$. Since
      $\th$ is a primitive root of unity we can apply the inverse Euler phi
      function to bound the degree of the minimal polynomial of $\th$. Proceeding
      through all cases, we find that $M=0$ and $\mcC$ is pointed. 
  \end{proof}

  While this cyclotomic analysis has been quite fruitful, the remaining,
  properly premodular case proves to be resistant and so other approaches are
  necessary. We begin by recalling that every fusion category admits a (possibly
  trivial) grading. Since the category has small rank, the grading possibilities
  allow for further stratification of the problem.
  \begin{proposition}
    If $\mcC$ is a self-dual rank 4 non-pointed properly premodular category
    $\langle X_{0},X_{1}\rangle=\mcC'$ is Grothendieck equivalent to $\Rep\(\mbbZ/2\mbbZ\)$, then there are
    three cases.
    \begin{enumerate}
      \item $\mcC$ admits a universal $\mbbZ/2\mbbZ$ grading
      \item $\mcC$ does not admit a universal $\mbbZ/2\mbbZ$ grading and $X_{1}\otimes X_{2}=X_{2}$
      \item $\mcC$ does not admit a universal $\mbbZ/2\mbbZ$ grading and $X_{1}\otimes X_{2}=X_{3}$
    \end{enumerate}
  \end{proposition}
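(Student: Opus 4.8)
The plan is to read off the admissible fusion patterns from the invertibility of $X_{1}$, and then to show that the only nontrivial grading $\mcC$ can carry is by $\mbbZ_{2}$; the three cases are then exactly ``a nontrivial grading exists'' together with the two fusion patterns that survive in the ungraded situation.

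First I would record the structure of $X_{1}$. Since $\langle X_{0},X_{1}\rangle=\mcC'\cong\Rep(\mbbZ_{2})$ is pointed, $X_{1}$ is invertible with $X_{1}\otimes X_{1}\cong X_{0}$, and by hypothesis $X_{1}^{*}=X_{1}$. Tensoring by the invertible object $X_{1}$ thus defines an order-two permutation of the classes $X_{0},X_{1},X_{2},X_{3}$ interchanging $X_{0}$ and $X_{1}$. In particular $X_{1}\otimes X_{2}$ is simple; it cannot be $X_{0}$ (else $X_{2}\cong X_{1}$) nor $X_{1}$ (else $X_{2}\cong X_{0}$), so $X_{1}\otimes X_{2}\cong X_{2}$ or $X_{1}\otimes X_{2}\cong X_{3}$. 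In the first case the permutation fixes $X_{3}$ as well, and in the second it interchanges $X_{2}$ and $X_{3}$; these are the fusion patterns of Cases 2 and 3.

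Next I would analyze the universal grading. Every fusion category is faithfully graded by its universal grading group $U(\mcC)$, with trivial component $\mcC_{ad}$, and all homogeneous components share a common Frobenius--Perron dimension. I would show $U(\mcC)$ is trivial or $\mbbZ_{2}$ as follows. A group of order $4$ is impossible by counting: faithfulness puts exactly one simple object in each of the four components, so the unit component is $\{X_{0}\}$, every component has Frobenius--Perron dimension $1$, and hence every object is invertible, contradicting non-pointedness. A group of order $3$ is impossible because the degree of $X_{1}$ has order dividing two and $\mbbZ_{3}$ has no such nontrivial element, so $X_{0}$ and $X_{1}$ lie in the unit component and the two remaining simples occupy the two nonunit components; then $X_{2}\otimes X_{2}$ is supported on the component of $X_{3}$ and cannot contain $X_{0}$, contradicting the self-duality of $X_{2}$. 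Therefore $\mcC$ admits a nontrivial grading if and only if it admits a universal $\mbbZ_{2}$ grading.

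Finally I would assemble the trichotomy: if $U(\mcC)\cong\mbbZ_{2}$ we are in Case 1, and otherwise $U(\mcC)$ is trivial and the first step leaves precisely $X_{1}\otimes X_{2}\cong X_{2}$ (Case 2) or $X_{1}\otimes X_{2}\cong X_{3}$ (Case 3); these are mutually exclusive and, with Case 1, exhaustive. The main obstacle is the middle step, namely ruling out $|U(\mcC)|\ge 3$: this is where self-duality and non-pointedness must be combined with the equality of Frobenius--Perron dimensions of the graded components, whereas the first and last steps are immediate once $X_{1}$ is known to be invertible.
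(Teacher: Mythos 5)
Your proof is correct and takes essentially the same route as the paper: split on whether the universal grading is nontrivial (necessarily $\mbbZ_{2}$) and read off the two fusion patterns from the invertibility of $X_{1}$. The only difference is that you supply the counting arguments (via equal Frobenius--Perron dimensions of graded components and self-duality of $X_{2}$) for the claim that a nontrivial universal grading must be $\mbbZ_{2}$, which the paper asserts without proof.
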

  \begin{proof}
    If $\mcC$ admits a nontrivial universal grading, then it must be by
    $\mbbZ/2\mbbZ$.  On the other hand, if $\mcC$ does not admit a universal
    grading, then $\mcC_{ad}=\mcC$ \cite{DGNO1}. Since $X_{1}$ generates
    $\mcC'\cong\Rep\(\mbbZ/2\mbbZ\)$, we can conclude that if $\mcC_{ad}=\mcC$ then
    either $X_{1}\otimes X_{2}=X_{2}$ or $X_{1}\otimes X_{2}=X_{3}$. 
  \end{proof}

  With this proposition in hand we again proceed by cases. First, we consider
  with the relatively simple case: $\mcC$ admits a universal $\mbbZ/2\mbbZ$
  grading.
  \begin{proposition}
    \label{Z2GradedZ2Subcat}
    Suppose $\mcC$ is a self-dual rank 4 non-pointed properly premodular
    category admitting a universal $\mbbZ_{2}$ grading such that $\mcC'$ is
    Grothendieck equivalent to $\Rep\(\mbbZ_{2}\)$, then $\mcC$ is a Deligne product of the $\Fib$ with
    $\Rep\(\mbbZ/2\mbbZ\)$ or $\sVec$.
  \end{proposition}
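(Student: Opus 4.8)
The plan is to exploit the faithful $\mbbZ_2$-grading $\mcC = \mcC_0 \oplus \mcC_1$, where $\mcC_0 = \mcC_{ad}$ is the adjoint (trivial) component, together with the standard fact that in a faithful grading the components have equal Frobenius--Perron dimension, $\FPdim(\mcC_0) = \FPdim(\mcC_1) = \tfrac12\FPdim(\mcC)$ \cite{DGNO}. First I would pin down the simple objects. Since $X_0$ and $X_1$ generate $\mcC' \cong \Rep(\mbbZ_2)$, the object $X_1$ is invertible of order two; were either $X_2$ or $X_3$ also invertible, the group of invertibles would have order $3$ or $4$, but an order-$3$ group cannot contain the involution $X_1$, and order $4$ forces $\mcC$ pointed. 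Both are excluded, so $d_2, d_3 > 1$ and $\mcC_{pt} = \{X_0,X_1\}$. Combining this with the dimension balance leaves, up to relabelling $X_2 \leftrightarrow X_3$, only two distributions of simples: either $\mcC_0 = \{X_0,X_2\}$ and $\mcC_1 = \{X_1,X_3\}$ (forcing $d_2 = d_3$), or $\mcC_0 = \{X_0,X_1,X_2\}$ and $\mcC_1 = \{X_3\}$ (forcing $d_3^2 = 2 + d_2^2$); every other distribution fails the balance outright.

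The main obstacle is excluding the second distribution, in which $X_1 \in \mcC_{ad}$. Here invertibility of $X_1$ and the grading force $X_1\otimes X_2 = X_2$ and $X_1\otimes X_3 = X_3$, so by \eqref{FusionSymmetries} we get $N_{22}^1 = N_{12}^2 = 1$, while $X_2\otimes X_3 \in \mcC_1 = \{X_3\}$ gives $N_{23}^3 = d_2 \in \mbbZ$. Writing $X_2\otimes X_2 = X_0\oplus X_1\oplus N_{22}^2 X_2$ and solving $d_2^2 = 2 + N_{22}^2 d_2$ with $d_2$ a positive integer pins $d_2 = 2$, $N_{22}^2 = 1$, $d_3 = \sqrt6$, so that $\mcC_0$ carries the fusion rules of $\Rep(\mfS_3)$. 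Feeding this datum through the balancing relation \eqref{Balancing} and then the premodular Verlinde relation \eqref{PreVerlinde} with $a=3$, $b=c=2$ yields $2\theta_2^{-2} - \theta_2^{-1} - 1 = 0$, whose only root of modulus one is $\theta_2 = 1$. But then $\tilde{s}_{22} = d_2^2$ and $\tilde{s}_{23} = d_2 d_3$, i.e. $X_2$ centralizes every simple object and so $X_2 \in \mcC'$, contradicting $\mcC' = \{X_0,X_1\}$. This disposes of the second distribution.

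It then remains to treat $\mcC_0 = \{X_0,X_2\}$ with $d_2 = d_3$ and $X_1\otimes X_2 = X_3$. Since $\mcC_0 = \mcC_{ad}$ is tensor-closed, $X_2\otimes X_2 = X_0\oplus N_{22}^2 X_2$, and the rank-$2$ classification forces $N_{22}^2 = 1$ with $d_2 = \t$, so $\mcC_0$ realizes the Fibonacci fusion rules. Because $\mcC_0$ is a rank-$2$ braided category whose object $X_2$ has non-integral dimension, it cannot be symmetric (a symmetric fusion category has integral Frobenius--Perron dimensions), whence its Müger center is trivial and $\mcC_0$ is a modular subcategory, equivalent to $\Fib$ or its Galois conjugate. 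I would then invoke Müger's factorization theorem \cite{Muger1}\cite{DGNO}: a modular fusion subcategory $\mathcal D\subseteq\mcC$ splits off as $\mcC \simeq \mathcal D \boxtimes C_\mcC(\mathcal D)$. Taking $\mathcal D = \mcC_0$, a dimension count gives $\FPdim(C_\mcC(\mcC_0)) = \FPdim(\mcC)/\FPdim(\mcC_0) = 2$, and since the transparent objects $X_0, X_1$ centralize $\mcC_0$, we conclude $C_\mcC(\mcC_0) = \{X_0,X_1\} = \mcC'$. Hence $\mcC \simeq \Fib \boxtimes \mcC'$, where $\mcC'$ is a pointed symmetric category on two objects, namely $\Rep(\mbbZ_2)$ or $\sVec$ according as $\theta_1 = 1$ or $\theta_1 = -1$, which is the claim (with $\Fib$ understood up to Galois conjugacy).
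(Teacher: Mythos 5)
Your proof is correct, but it takes a genuinely different route from the paper's. The paper works entirely with the premodular datum: tacitly assuming the even split $\mcC_0=\{X_0,X_2\}$, $\mcC_1=\{X_1,X_3\}$, it determines the fusion rules up to $N_{22}^{2}$, uses Ostrik's low-rank classification to force $N_{22}^{2}=1$ and $d_2=d_3=\frac{1\pm\sqrt{5}}{2}$, and then pins down $\tilde{S}$ and $T$ via the balancing relation and the fact that the normalized columns of $\tilde{S}$ are characters of the fusion ring. You instead recognize $\mcC_0=\mcC_{ad}$ as a rank-2 non-symmetric (hence modular) Fibonacci subcategory and invoke M\"uger's factorization theorem to split $\mcC\simeq\mcC_0\boxtimes C_{\mcC}(\mcC_0)$ with $C_{\mcC}(\mcC_0)=\mcC'$: this is a bigger structural hammer, but it buys an honest ribbon equivalence with $\Fib\boxtimes\Rep(\mbbZ_2)$ or $\Fib\boxtimes\sVec$ rather than just the Grothendieck-level datum. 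Your treatment is also more complete in one respect: the faithful grading also allows the split $\mcC_0=\{X_0,X_1,X_2\}$, $\mcC_1=\{X_3\}$ (so $d_2=2$, $d_3=\sqrt{6}$, with $\Rep(\mfS_3)$ fusion rules in degree zero), a case the paper's written proof passes over silently (its opening dimension count only fits the even split), and you exclude it correctly via the Verlinde relation together with M\"uger's criterion that $\tilde{s}_{2c}=d_2d_c$ for all $c$ forces $X_2\in\mcC'$. Two small points to tighten: in that excluded case your equation tacitly takes $\th_1=1$, which is justified because a transparent $X_1$ satisfying $X_1\otimes X_3=X_3$ must have $\th_1=1$ by balancing (and even allowing $\th_1=-1$, the same Verlinde relation would give $\th_2=2$, impossible); and in the main case only $\FPdim(X_2)=\t$ is forced, while the categorical dimension may be $\bar{\t}$, a point your closing Galois-conjugacy remark covers.
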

  \begin{proof}
    Dimension count coupled with the representation theory of $\mbbZ/2\mbbZ$
    completely determines the fusion relations up to $N_{22}^{2}$. However, we
    can apply \cite{O4} to conclude that $N_{22}^{2}\in\lcb0,1\rcb$.
    $N_{22}^{2}=0$ leaves a pointed category and so we must have $N_{22}^{2}=1$,
    and $d:=d_{2}=d_{3}=\frac{1\pm\sqrt{5}}{2}$. Applying equation
    \eqref{Balancing} and the fact that a column of the $S$--matrix must be a
    multiple of the first we find that $\th_{1}=\pm1$,
    $\th:=\th_{2}=\th_{1}\th_{3}$, and
    \begin{equation*}
      \tilde{S}=\(\begin{smallmatrix}
      1 & 1 & d & d\\
      1 & 1 & d & d\\
      d & d & \frac{1+d\th}{\th^{2}} &  \frac{1+d\th}{\th^{2}}\\
      d & d & \frac{1+d\th}{\th^{2}} & \frac{1+d\th}{\th^{2}}
      \end{smallmatrix}\)
      \quad
      T=\(\begin{smallmatrix}
      1 & 0 & 0 & 0\\
      0 & \pm1 & 0 & 0\\
      0 & 0 & \th & 0\\
      0 & 0 & 0 & \pm\th
      \end{smallmatrix}\).
    \end{equation*}

    Since the normalized columns of the $S$--matrix are characters of the fusion
    ring, it must be that $\frac{1+d\th}{d\th^{2}}$ is a simultaneous root of
    the characteristic polynomials of $N_{2}$ and $N_{3}$. This gives the
    desired result.
  \end{proof}

  Finally, we come to the last two cases where $\mcC'$ is Grothendieck
  equivalent to $\Rep\(\mbbZ_{2}\)$ and the universal grading group is trivial. 
  These are by far the most complicated cases. To
  dispense with the first case we make use of the minimal modularization
  \cite{Brug1}.
  \begin{proposition}
    Suppose $\mcC$ is a self-dual, rank 4, non-pointed, properly premodular
    category such that $\mcC'$ is Grothendieck equivalent to $\Rep\(\mbbZ/2\mbbZ\)$, $\mcC$ does not admit a
    nontrivial universal grading, and $X_{1}\otimes X_{2} = X_{2}$, then 
    \begin{align*}
    \tilde{S}=\(\begin{smallmatrix}
    1 & 1 & 2 & 2\\
    1 & 1 & 2 & 2\\
    2 & 2 & \frac{2+2\theta}{\theta^{3}} & \frac{2+2\theta^{2}}{\theta}\\
    2 & 2 & \frac{2+2\theta^{2}}{\theta} & \frac{2+2\theta}{\theta^{3}} 
    \end{smallmatrix}\)
    \quad
    T=\(\begin{smallmatrix}
    1 & 0 & 0 & 0\\
    0 & 1 & 0 & 0\\
    0 & 0 & \theta & 0\\
    0 & 0 & 0 & \theta^{-1}
    \end{smallmatrix}\)
    \ \\
    N_{1}=\(\begin{smallmatrix}
    0 & 1 & 0 & 0\\
    1 & 0 & 0 & 0\\
    0 & 0 & 1 & 0\\
    0 & 0 & 0 & 1
    \end{smallmatrix}\)
    \quad
    N_{2} =\(\begin{smallmatrix}
    0 & 0 & 1 & 0\\
    0 & 0 & 1 & 0\\
    1 & 1 & 0 & 1\\
    0 & 0 & 1 & 1
    \end{smallmatrix}\)
    \quad
    N_{3}=\(\begin{smallmatrix}
    0 & 0 & 0 & 1\\
    0 & 0 & 0 & 1\\
    0 & 0 & 1 & 1\\
    1 & 1 & 1 & 0
    \end{smallmatrix}\),
    \end{align*}

    and $\th$ is a primitive $5^{\text{th}}$ root of unity. Such datum are
    realized by $\mcC=\mcC\(\mathfrak{so}\(5\),10\)_{ad}$.
  \end{proposition}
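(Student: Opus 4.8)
The plan is to reduce $\mcC$ to a modular category of smaller complexity via Bruguieres' minimal modularization and then reconstruct the datum by equivariantization; the pure cyclotomic analysis used in the preceding cases is exactly what is ``resistant'' here, so modularization is the essential new device.

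First I would record the fusion with $X_1$. Since $\mcC'\cong\Rep\(\mbbZ_{2}\)$ is Tannakian, $X_1$ is an invertible self-dual object with $d_1=1$, $\th_1=1$, and $X_1\otimes X_1=X_0$. Tensoring with $X_1$ permutes the simples and swaps $X_0\leftrightarrow X_1$; as $X_1\otimes X_2=X_2$ by hypothesis, the remaining object is fixed too, so $X_1\otimes X_3=X_3$. This gives $N_1$ as stated and shows $X_2^{\otimes2}$, $X_3^{\otimes2}$, $X_2\otimes X_3$ are all $X_1$-invariant; with self-duality and the symmetries of \eqnref{FusionSymmetries} this forces $X_2^{\otimes2}=X_0\oplus X_1\oplus aX_2\oplus bX_3$, $X_3^{\otimes2}=X_0\oplus X_1\oplus a'X_2\oplus b'X_3$, and $X_2\otimes X_3=bX_2\oplus a'X_3$ for nonnegative integers $a,b,a',b'$, with corresponding dimension equations. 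The balancing relation \eqnref{Balancing}, the requirement from proper premodularity that a column of $\tilde S$ be a multiple of the first (forcing $\th_1=1$ and fixing the first two rows and columns of $\tilde S$), and the fact that normalized columns of $\tilde S$ are characters of the fusion ring give polynomial constraints on $\th_2,\th_3$; as the surrounding text indicates, these constraints alone do not isolate the datum.

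The decisive step is modularization. Because $\mcC'\cong\Rep\(\mbbZ_{2}\)$ is Tannakian, $\mcC$ admits a minimal modularization $\widetilde{\mcC}$, the de-equivariantization by $\mbbZ_2=\langle X_1\rangle$, which is modular with $\dim(\widetilde{\mcC})=\tfrac12\dim(\mcC)$ and $(\widetilde{\mcC})^{\mbbZ_2}\cong\mcC$. I would compute its rank through the free-module functor $X\mapsto(X_0\oplus X_1)\otimes X$: the orbit $\{X_0,X_1\}$ becomes the tensor unit, while $X_2$ and $X_3$, being $X_1$-fixed, each split into two nonisomorphic simples. Hence $\widetilde{\mcC}$ has rank $5$ and carries a $\mbbZ_2$-action fixing the unit and freely interchanging the other four objects in two pairs. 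This is precisely why the rank $5$ modular classification of Appendix \ref{Rank5ModularAppendix} is needed. Applying that classification, the extra constraint of admitting such a $\mbbZ_2$-action---equivalently, of being the $\mbbZ_2$-modularization of a rank $4$ properly premodular category---singles out the cyclic pointed modular category on $\mbbZ_5$. Its four nontrivial objects are invertible, so equivariantization of the two free orbits $\{u,u^{-1}\}$ and $\{u^2,u^{-2}\}$ produces $X_2$ and $X_3$ with $d_2=d_3=2$ and $\dim(\mcC)=10$, and pulling the $\mbbZ_5$ fusion back yields exactly the stated $N_2,N_3$. Since the modularization functor is ribbon, twists are preserved: the quadratic form $q(u^k)=\z^{k^2}$ takes value $\z$ on $\{u,u^{-1}\}$ and $\z^{-1}$ on $\{u^2,u^{-2}\}$, so $\th_2=\th$ and $\th_3=\th^{-1}$ with $\th$ a primitive $5^{\text{th}}$ root of unity; inserting this into \eqnref{Balancing} reproduces $\tilde S$. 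Finally I would exhibit $\mcC(so(5),10)_{ad}$, whose modularization is this cyclic $\mbbZ_5$ theory, as a realization, establishing existence.

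The main obstacle will be the identification of $\widetilde{\mcC}$: the rank $5$ modular classification is itself only partial, so ruling out a non-pointed rank $5$ modularization (which a priori could yield $d_2,d_3>2$) must be carried out using the rigidity forced by the required $\mbbZ_2$-action---a fixed unit together with two free $2$-orbits, which pins the dimension at the prime $5$ and hence forces pointedness---rather than by quoting a complete list. Reconciling this with the incompleteness of Appendix \ref{Rank5ModularAppendix}, and verifying that $\mcC(so(5),10)_{ad}$ indeed carries the reconstructed datum, are the delicate points.
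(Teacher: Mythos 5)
Your proposal is correct and follows the paper's proof through its decisive steps: partially determining the fusion data, invoking Brugui\`{e}res' modularization to obtain a rank $5$ modular category in which the non-unit simples split into two pairs of equal dimension, and then quoting Proposition \ref{Rank5Modular} to conclude that this modularization is pointed. Your worry about the ``partial'' nature of Appendix \ref{Rank5ModularAppendix} is unfounded: the proposition there is stated under exactly the paired-dimension hypothesis that the de-equivariantization satisfies ($\dim Y_{1}=\dim Y_{2}$, $\dim Z_{1}=\dim Z_{2}$, forced by the $\mbbZ_{2}$-action or by duality within each pair), so it can be quoted verbatim with no extra rigidity argument. Where you genuinely diverge is the endgame. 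The paper, after pointedness, gets $d_{2}=\pm2$, $d_{3}=\pm2$, eliminates fusion coefficients by dimension count, cites \cite[Theorem 4.2]{NR} to conclude $\mcC$ is Grothendieck equivalent to $\Rep\(D_{n}\)$ (fixing $d_{2}=d_{3}=2$ and the fusion rules), and then pins the twists via equations \eqref{PreVerlinde} and \eqref{STCubed and STInvCubed}. You instead identify the modularization as $\mcC\(\mbbZ_{5},q\)$ and reconstruct $\mcC$ as its $\mbbZ_{2}$-equivariantization, reading off the fusion rules, $d_{2}=d_{3}=2$, and $\th_{2}=\th$, $\th_{3}=\th^{-1}$ directly from the quadratic form $q\(u^{k}\)=\th^{k^{2}}$; this is arguably cleaner, since it dispenses with the \cite{NR} citation and the Verlinde/$\(\tilde{S}T\)^{3}$ computations, and it even resolves the dimension-sign ambiguity for free (the dimensions of invertibles give a character $\mbbZ_{5}\to\lcb\pm1\rcb$, necessarily trivial). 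To make it airtight you should add two justifications you currently assert rather than prove: that the $\mbbZ_{2}$-action on $\mcC\(\mbbZ_{5},q\)$ is by inversion --- it is nontrivial (else the equivariantization would have rank $10$, not $4$) and inversion is the only nontrivial automorphism of $\mbbZ_{5}$ preserving $q$ --- and that all four primitive $5^{\text{th}}$ roots of unity do occur (the inequivalent forms $q$ and $q^{2}$ both arise, matching the statement and the Galois conjugates of $\mcC\(so\(5\),10\)_{ad}$).
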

  \begin{proof}
    The representation theory of $\mbbZ/2\mbbZ$, dimension count, equation
    \eqref{FusionSymmetries}, and equation \eqref{Balancing} give
    \begin{align*}
      N_{1}=\(\begin{smallmatrix}
      0 & 1 & 0 & 0\\
      1 & 0 & 0 & 0\\
      0 & 0 & 1 & 0\\
      0 & 0 & 0 & 1
      \end{smallmatrix}\)
      \quad
      N_{2}=\(\begin{smallmatrix}
      0 & 0 & 1 & 0\\
      0 & 0 & 1 & 0\\
      1 & 1 & N_{22}^{2} & N_{32}^{2}\\
      0 & 0 & N_{32}^{2} & N_{33}^{2}
      \end{smallmatrix}\)
      \quad
      N_{3}=\(\begin{smallmatrix}
      0 & 0 & 0 & 1\\
      0 & 0 & 0 & 1\\
      0 & 0 & N_{32}^{2} & N_{33}^{2}\\
      1 & 1 & N_{33}^{2} & N_{33}^{3}
      \end{smallmatrix}\)\\
      \tilde{S}=\(\begin{smallmatrix}
      1 & 1 & d_{2} & d_{3}\\
      1 & 1 & d_{2} & d_{3}\\
      d_{2} & d_{2} & \frac{2+d_{2}N_{22}^{2}\th_{2}+d_{3}N_{32}^{2}\th_{3}}{\th_{2}^{2}} & \frac{d_{2}N_{32}^{2}\th_{2}+d_{3}N_{33}^{2}\th_{3}}{\th_{2}\th_{3}}\\
      d_{3} & d_{3} & \frac{d_{2}N_{32}^{2}\th_{2}+d_{3}N_{33}^{2}\th_{3}}{\th_{2}\th_{3}} & \frac{2+d_{2}N_{33}^{2}\th_{2}+d_{3}N_{33}^{3}\th_{3}}{\th_{3}^{2}}
      \end{smallmatrix}\)
      \quad
      T=\(\begin{smallmatrix}
      1 & 0 & 0 & 0\\
      0 & 1 & 0 & 0\\
      0 & 0 & \th_{2} & 0\\
      0 & 0 & 0 & \th_{3}
      \end{smallmatrix}\).
    \end{align*}

    Applying \cite[Proposition 4.2]{Brug1}, we can deduce that $\mcC$
    admits a modularization $\hat{\mcC}$. We can now apply \cite{Brug1}
    Proposition 4.4 and the equivalence between Brugui\`{e}res modularization
    and the de-equivariantization to deduce that $\hat{\mcC}$ is a rank 5
    modular category with simple isomorphism classes of simple objects 
    $\mbbI, Y_{1}, Y_{2}, Z_{1}, Z_{2}$ such that
    $Y_{i}^{*}\in\lcb Y_{1}, Y_{2}\rcb$ and $Z_{i}^{*}\in\lcb Z_{1}, Z_{2}\rcb$.
    Applying the classification of rank 5 modular categories, \cite{BNRW2}, we
    can conclude that $\hat{\mcC}$ is pointed and hence $d_{2}=\pm2$ and $d_{3}=\pm2$. 
    Dimension count then allows us to eliminate all fusion coefficients except for $N_{33}^{3}$. Applying
    \cite[Theorem 4.2]{NR1}, we know that $\mcC$ is Grothendieck equivalent to
    $\Rep\(D_{n}\)$ and is group-theoretical. This gives $d_{2}=d_{3}=2$, and
    determines the fusion coefficients. Applying equations \eqref{PreVerlinde}
    and \eqref{STCubed and STInvCubed}, we find $\th_{3}=\th_{2}^{-1}$ and that
    $\th_{2}$ is a primitive 5th root of unity. 
  \end{proof}

  The final case requires not only the minimal modularization of Brugui\`{e}res
  but also the second Frobenius-Schur indicators.
  \begin{proposition}
    Suppose $\mcC$ is a self-dual rank 4 non-pointed properly premodular
    category such that $\mcC'$ is Grothendieck equivalent to $\Rep\(\mbbZ/2\mbbZ\)$, 
    $\mcC$ does not admit a nontrivial universal grading, and $X_{1}\otimes X_{2}=X_{3}$ then
    \begin{align*}
      \tilde{S}&=\(\begin{smallmatrix}
      1 & 1\pm\sqrt{2}\\
      1\pm\sqrt{2} & -1
      \end{smallmatrix}\)
      \otimes
      \(\begin{smallmatrix}
      1 & 1 \\ 1 & 1
      \end{smallmatrix}\)
      \quad
      T = \(\begin{smallmatrix}
      1 & 0 & 0 & 0\\
      0 & -1 & 0 & 0\\
      0 & 0 & i& 0 \\
      0 & 0 & 0 & -i
      \end{smallmatrix}\)
      \ \\
      N_{2}&=\(\begin{smallmatrix}
      0 & 0 & 1 & 0\\
      0 & 0 & 0 & 1\\
      1 & 0 & 1 & 1\\
      0 & 1 & 1 & 1
      \end{smallmatrix}\)
      \quad
      N_{3}=\(\begin{smallmatrix}
      0 & 0 & 0 & 1\\
      0 & 0 & 1 & 0\\
      0 & 1 & 1 & 1\\
      1 & 0 & 1 & 1
      \end{smallmatrix}\),
    \end{align*}
    such a datum is realized by $\mcC=\mcC\(\mathfrak{sl}\(2\),8\)_{ad}$ and its conjugates.
  \end{proposition}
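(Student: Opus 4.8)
The plan is to reduce the classification to two nonnegative integer parameters $a=N_{22}^{2}$ and $b=N_{22}^{3}$, to split on the twist $\theta_{1}$ of the transparent generator, and to close the two resulting branches by Brugui\`{e}res' modularization in one and the second Frobenius--Schur indicator in the other. First I would fix the fusion data. The generator $X_{1}$ of the symmetric subcategory is invertible with $d_{1}=1$, and $X_{1}\otimes-$ is the involution fixing $X_{0}$ and, by the case hypothesis, interchanging $X_{2}$ and $X_{3}$; in particular $d_{2}=d_{3}=:d$. Self-duality together with the symmetries \eqref{FusionSymmetries} give $N_{22}^{0}=1$, $N_{22}^{1}=0$ and $X_{2}\otimes X_{2}=X_{0}+aX_{2}+bX_{3}$; applying $X_{1}\otimes-$ and commutativity then yields $X_{2}\otimes X_{3}=X_{1}+bX_{2}+aX_{3}$ and $X_{3}\otimes X_{3}=X_{0}+aX_{2}+bX_{3}$, so $N_{2},N_{3}$ are fixed by $(a,b)$ and the dimension equation reads $d^{2}=1+(a+b)d$. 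Non-pointedness excludes $(a,b)=(0,0)$, and the absence of a universal grading, i.e. $\mcC_{ad}=\mcC$ \cite{DGNO}, forces $a\geq 1$ and $b\geq 1$: the adjoint subcategory is generated by $X_{2}\otimes X_{2}$, which contains $X_{2}$ (resp. $X_{3}$) exactly when $a\geq1$ (resp. $b\geq1$), and then contains $X_{1}$ through $X_{2}\otimes X_{3}$. In particular $d\geq 1+\sqrt{2}$. Finally the transparency of $X_{1}$ fed into the balancing relation \eqref{Balancing} gives $\theta_{1}=\pm1$ and $\theta_{3}=\theta_{1}\theta_{2}$, and expresses every entry of $\tilde{S}$ through $a,b,d,\theta_{1},\theta_{2}$.

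Next I would eliminate $\theta_{1}=1$. In that case the symmetric subcategory is Tannakian, so \cite[Proposition 4.2]{Bruguieres} produces a modularization $\hat{\mcC}$. Since $\{X_{2},X_{3}\}$ is a free $\langle X_{1}\rangle$-orbit it descends to a single simple object $Y$ of dimension $d$, and $X_{2}\otimes X_{2}=X_{0}+aX_{2}+bX_{3}$ descends to $Y\otimes Y=\mbbI+(a+b)Y$. Thus $\hat{\mcC}$ is a rank $2$ fusion category with $N_{YY}^{Y}=a+b\geq2$, which is impossible \cite{O1}. Hence $\theta_{1}=-1$ and $\theta_{3}=-\theta_{2}$, whereupon the balancing relation collapses the lower block to $\tilde{s}_{22}=\tilde{s}_{23}=\tilde{s}_{33}=\theta_{2}^{-2}+(a-b)d\,\theta_{2}^{-1}$.

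With $\theta_{1}=-1$ in hand I would determine $\theta_{2}$ and show $a=b$. Writing $\theta_{2}=e^{i\phi}$, reality of $\tilde{s}_{22}$ (forced by \eqref{SMatrixSymmetries}) requires $\sin\phi=0$ or $\cos\phi=-(a-b)d/2$; since $d\geq1+\sqrt2$ the second option needs $|a-b|<1$, i.e. $a=b$, while the first ($\theta_{2}=\pm1$) is ruled out by the pre-Verlinde relation \eqref{PreVerlinde}. Taking $a=b$, reality gives $\theta_{2}^{2}=\pm1$, and \eqref{PreVerlinde} applied to $\tilde{s}_{22}^{2}=d\left(\tilde{s}_{20}+2a\tilde{s}_{22}\right)$ discards $\theta_{2}^{2}=1$ and yields $\tilde{s}_{22}=-1$, i.e. $\theta_{2}=\pm i$. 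At this stage the entire premodular datum is determined by the single integer $a=b\geq1$.

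The hard part, and the reason the indicator is needed, is that the surviving center $\sVec$ is not Tannakian, so the modularization argument that closed every earlier case is unavailable here; bounding the self-fusion coefficient $a$ requires a genuinely new tool. I would invoke the integrality of \corref{Modular FS is Algebraic Integer}: evaluating its sum on the self-dual object $X_{2}$ and simplifying with $d^{2}=1+2ad$ gives $a+\dfrac{a^{2}-1}{\sqrt{a^{2}+1}}$, which lies in $\mbbQ$ only when $a=1$, since for $a\geq2$ the factor $\sqrt{a^{2}+1}$ is irrational and does not cancel. Therefore $a=b=1$, $d=1+\sqrt2$ and $\theta_{2}=\pm i$; substituting back reproduces the asserted $\tilde{S},T,N_{2},N_{3}$, the two sign choices being Galois conjugates, and I would finish by exhibiting $\mcC\(sl\(2\),8\)_{ad}$ as a realization. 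The crux of the whole argument is thus exactly this last computation: verifying that the indicator sum of \thmref{Second FS By Transparent} fails to be a rational integer for every $a\geq2$.
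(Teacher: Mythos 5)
Your proof has the same skeleton as the paper's: reduce the fusion rules to two non-negative integers, kill the Tannakian branch $\theta_{1}=1$ by Brugui\`{e}res modularization (your rank-2 contradiction $N_{YY}^{Y}=a+b\geq 2$ is the same argument as the paper's appeal to the rank-2 classification), and close the branch $\theta_{1}=-1$ with \corref{Modular FS is Algebraic Integer} applied to $X_{2}$; your closing step, that $a+\frac{a^{2}-1}{\sqrt{a^{2}+1}}$ is irrational for $a\geq 2$, is in fact a slightly cleaner finish than the paper's reduction to $4=(N^{2}+1)(3+L^{2}-2LN)$. However, there is a genuine gap in the middle: the assertion that $d\geq 1+\sqrt{2}$. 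A properly premodular category need not be pseudo-unitary, so $d=\tilde{s}_{02}$ is constrained only by $d^{2}=1+(a+b)d$, which has a second root $\frac{(a+b)-\sqrt{(a+b)^{2}+4}}{2}\in(-1,0)$. This is not a harmless normalization: the proposition you are proving contains $d=1-\sqrt{2}$, and you cannot recover the negative-root categories at the end by invoking ``Galois conjugates,'' since Galois techniques are exactly what the paper says fail in the premodular setting (its $\Fib\boxtimes\Rep\left(\mbbZ_{2}\right)$ example); this is why the paper's own proof carries the sign $\e=\pm1$ through the entire analysis rather than appending conjugates afterwards.

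The positivity assumption is load-bearing precisely where you invoke it. In the reality dichotomy $\sin\phi=0$ or $\cos\phi=-(a-b)d/2$, the conclusion $a=b$ needs $2/|d|<1$; for the negative root one has $2/|d|=(a+b)+\sqrt{(a+b)^{2}+4}>|a-b|$, so the bound never bites and entire families with $a\neq b$ survive your analysis unexamined. These are exactly the cases that occupy most of the paper's Case 2: it enumerates seven solution families from \eqnref{PreVerlinde} and \eqnref{STCubed and STInvCubed} and must eliminate the $M\neq N$ ones by a Pythagorean-triple argument and a degree-6 root-of-unity computation via the inverse totient function. Your strategy can be repaired without those tools, because your indicator computation extends to this branch: when $\cos\phi=-(a-b)d/2$ one finds $\tilde{s}_{22}=\tilde{s}_{23}=\tilde{s}_{33}=-1$ automatically, and the sum in \corref{Modular FS is Algebraic Integer} evaluates to $d\,\frac{((a-b)^{2}+1)d^{2}-3}{1+d^{2}}$, whose rationality forces $((a-b)^{2}+1)((a+b)^{2}+2)=6$, i.e.\ $a=b=1$, for either root of the dimension equation. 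But as written, your proof silently discards half of the categories it claims to classify, together with the hardest families the classification has to exclude.
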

  \begin{proof}
    Applying dimension count, equation \eqref{FusionSymmetries}, and the usual
    representation theory for $\mbbZ/2\mbbZ$, we can determine the fusion rules up
    to two parameters:
    \begin{equation*}
      N_{1}=\(\begin{smallmatrix}
      0 & 1 & 0 & 0\\
      1 & 0 & 0 & 0\\
      0 & 0 & 0 & 1\\
      0 & 0 & 1 & 0
      \end{smallmatrix}\)
      \quad
      N_{2}=\(\begin{smallmatrix}
      0 & 0 & 1 & 0\\
      0 & 0 & 0 & 1\\
      1 & 0 & N & M\\
      0 & 1 & M & N
      \end{smallmatrix}\)
      \quad
      N_{3}=\(\begin{smallmatrix}
      0 & 0 & 0 & 1\\
      0 & 0 & 1 & 0\\
      0 & 1 & M & N\\
      1 & 0 & N & M
      \end{smallmatrix}\).
    \end{equation*}

    Furthermore, we can deduce that $M,N\neq0$ lest we reduce to the fusion
    rules of \propref{Z2GradedZ2Subcat} or a pointed category. Next, we may use
    equation \eqref{Balancing}, dimension count, and that
    $\tilde{s}_{ij}=\l\tilde{s_{i0}}$ for some $j$ and some
    $\l\in\mbbC^{\times}$, to find the $S$-- and $T$--matrices:
    \begin{align*}
      \tilde{S}&=\(\begin{smallmatrix}
      1 & \frac{N+M+\e\sqrt{4+\(M+N\)^{2}}}{2}\\
      \frac{N+M+\e\sqrt{4+\(M+N\)^{2}}}{2} & \frac{2+\(N\theta+\delta M\theta\)\(N+M+\e\sqrt{4+\(M+N\)^{2}}\)}{2\theta^{2}}
      \end{smallmatrix}\)\otimes\(\begin{smallmatrix}
      1 & 1\\
      1 & 1
      \end{smallmatrix}\)
      \ \\
      T&=\(\begin{smallmatrix}
      1 & 0 & 0 & 0\\
      0 & \d & 0 & 0\\
      0 & 0 & \th & 0\\
      0 & 0 & 0  & \d\th
      \end{smallmatrix}\),
    \end{align*}

    where $\e,\d=\pm1$. We treat $\d=1$ and $\d=-1$ in separate cases.

    \textbf{Case 1:} $\d=1$\ \\
      Here we can apply \cite{Brug1} to deduce that $\mcC$ is
      modularizable. Letting $H:\mcC\to\hat{\mcC}$ denote its minimal
      modularization then we have $X_{2}\in M_{\mcC}X_{3}$ and so
      $H\(X_{2}\)\cong H\(X_{3}\)$. Furthermore, $\|Stab_{M_{\mcC}}X\|=1$ for
      all simple $X$ and thus, $\dim H\(X_{2}\)=\dim\(X_{2}\)$. Consequently,
      the trivial object in $\hat{\mcC}$ as well as $H\(X_{2}\)$ account for
      $1+d^{2}$ of the dimension of $\hat{\mcC}$. However,
      $\dim\hat{\mcC}=\frac{1}{2}{\dim\(\mcC\)}=1+d^{2}$ and so $\hat{\mcC}$ is
      a rank 2 modular category with simple objects $\mbbI$ and $H\(X_{2}\)$.
      Such categories have been classified in \cite{RSW} and are the Semion and
      the Fibonacci. In these situations, we find either that $\mcC$ is pointed
      or that $M=N=0$ and so we can exclude the case of $\d=1$.

    \textbf{Case 2:} $\d=-1$.\ \\
      A straightforward application of \eqref{PreVerlinde} and \eqref{STCubed
      and STInvCubed} in a computer algebra system is used to further reduces the solution space. Discarding any
      solutions where either $M$ or $N$ is 0 or $\mcC$ is symmetric leaves 7
      possible families of solutions. One of these families contains a
      Pythagorean triple with 1 which forces $N<0$ and hence can be discarded.
      Two of the other families of solutions have $M$ and $N$ related by
      \begin{equation*}
        M=\frac{-N\th^{2}\pm\sqrt{-\th\(1+\th^{2}\)^{2}\(1-\(1+N^{2}\)\th+\th^{2}\)}}{\th\(1+\th\(\th-1\)\)}.
      \end{equation*}

      Since $\th\neq0$, this can be arranged into a monic integral degree 6
      polynomial $\th$. Since $\th$ is a root of unity we can apply the inverse
      Euler phi function to find a possible list of values for $\th$. Direct
      calculation reveals that none of these roots of unity can satisfy this
      polynomial in a manner consistent with $M, N> 0$.

    The remaining four families can be reduced by resolving a labeling ambiguity
    to give
    \begin{align*}
      \tilde{S}&=\(\begin{smallmatrix}
      1 & N+\e\sqrt{1+N^{2}}\\
      N+\e\sqrt{1+N^{2}} & -1
      \end{smallmatrix}\)
      \otimes
      \(\begin{smallmatrix}
      1 & 1 \\ 1 & 1
      \end{smallmatrix}\)
      \quad
      T = \(\begin{smallmatrix}
      1 & 0 & 0 & 0\\
      0 & -1 & 0 & 0\\
      0 & 0 & i& 0 \\
      0 & 0 & 0 & -i
      \end{smallmatrix}\)\ \\
      N_{2}&=\(\begin{smallmatrix}
      0 & 0 & 1 & 0\\
      0 & 0 & 0 & 1\\
      1 & 0 & N & N\\
      0 & 1 & N & N
      \end{smallmatrix}\)
      \quad
      N_{3}=\(\begin{smallmatrix}
      0 & 0 & 0 & 1\\
      0 & 0 & 1 & 0\\
      0 & 1 & N & N\\
      1 & 0 & N & N
      \end{smallmatrix}\).
    \end{align*}

    Applying \corref{Modular FS is Algebraic Integer} to $X_{2}$, we find that
    $N\pm\frac{N^{2}-1}{\sqrt{N^{2}+1}}\in\mbbZ$. Denoting this integer by $L$
    and simplifying we find 
    \begin{align*}
      4=\(N^{2}+1\)\(3+L^{2}-2 L N\)
    \end{align*}

    However, $N^{2}+1\neq0$ and so, reducing modulo $N^{2}+1$, we find that
    $4\equiv 0\mod N^{2}+1$.

    This only occurs for $N\in\lcb-1,0,1\rcb$. Since $N=0$ leads to $\mcC$ being
    pointed and we know $N\geq0$, we can conclude that $N=1$.
  \end{proof}

  The results of this section can be compiled to give the following theorem.
  \begin{theorem}
    \label{Rank4Premodular}
    If $\mcC$ is a non-pointed rank 4 premodular category, then exactly one of the following is true
    \begin{enumerate}
      \item $\mcC$ is symmetric and is Grothendieck equivalent to
        $\Rep\(G\)$ where $G$ is $\mbbZ/4\mbbZ$, $\mbbZ/2\mbbZ\times\mbbZ/2\mbbZ$,
        $D_{10}$, or $\mfA_{4}$.
      \item $\mcC$ is properly premodular and is Grothendieck equivalent to
        a Galois conjugate of one of the following: $\mcC\(sl\(2\),8\)_{ad}$,
        $\mcC\(sl\(2\),6\)_{ad}$, $\mcC\(so\(5\),10\)_{ad}$,
        $\Fib\boxtimes\Rep\(\mbbZ/2\mbbZ\)$, or $\Fib\boxtimes \sVec$.
      \item $\mcC$ is modular and is Galois conjugate to a modular category
        from \cite{RSW} or has $S$-matrix
        \begin{equation*}
          \(\begin{smallmatrix}
            1&-1&\bar{\t}&\t\\
            -1&1&-\t&-\bar{\t}\\
            \bar{\t}&-\t&-1&-1\\
            \t&-\bar{\t}&-1&-1
          \end{smallmatrix}\),
        \end{equation*}

        where $\t=\frac{1+\sqrt{5}}{2}$ is the golden mean and
        $\bar{\t}=\frac{1-\sqrt{5}}{2}$ is its Galois conjugate.
    \end{enumerate}
  \end{theorem}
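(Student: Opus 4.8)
The statement is a compilation of the results established throughout this section, so the plan is to assemble them along the partition of premodular categories into symmetric, properly premodular, and modular classes. First I would invoke the trichotomy recorded in the discussion of the M\"{u}ger center: a premodular category is symmetric precisely when $\mcC=\mcC'$, modular precisely when $\mcC'=\lcb\mbbI\rcb$, and properly premodular otherwise. These three conditions are mutually exclusive and exhaustive, which immediately furnishes the ``exactly one of the following'' form of the conclusion; it then remains only to identify the categories arising in each class, restricting throughout to the non-pointed case as in the individual propositions.

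For the symmetric class, the result follows directly from the proposition deduced from \cite{Deligne}, yielding the four representation categories $\Rep\(\mbbZ_{4}\)$, $\Rep\(\mbbZ_{2}\times\mbbZ_{2}\)$, $\Rep\(D_{10}\)$, and $\Rep\(\mfA_{4}\)$ of item 1. For the modular class, I would simply cite \propref{Rank4Modular}, which records that $\mcC$ is Galois conjugate either to a category from \cite{RSW} or to the one with the displayed golden-mean $S$--matrix (namely $\Fib\boxtimes\bar{\Fib}$), reproducing item 3 verbatim.

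The substance lies in the properly premodular class of item 2. Here I would first apply \propref{FourCases}, which shows that the M\"{u}ger center of a non-pointed properly premodular rank 4 category is one of $\Rep\(\mfS_{3}\)$ in the self-dual case, $\Rep\(\mbbZ_{3}\)$ with $X_{1}^{*}=X_{2}$, or $\Rep\(\mbbZ_{2}\)$ in either the self-dual or the $X_{2}^{*}=X_{3}$ case. I would then dispose of each case by the corresponding proposition proved above: the $\Rep\(\mfS_{3}\)$ and non-self-dual $\Rep\(\mbbZ_{2}\)$ cases are empty; the $\Rep\(\mbbZ_{3}\)$ case is realized exactly by $\mcC\(sl\(2\),6\)_{ad}$ via \propref{RepZ3}; and the self-dual $\Rep\(\mbbZ_{2}\)$ case splits, according to the grading dichotomy, into the universally $\mbbZ_{2}$--graded subcase (\propref{Z2GradedZ2Subcat}, giving $\Fib\boxtimes\Rep\(\mbbZ_{2}\)$ and $\Fib\boxtimes\sVec$) and the two ungraded subcases $X_{1}\otimes X_{2}=X_{2}$ (giving $\mcC\(so\(5\),10\)_{ad}$) and $X_{1}\otimes X_{2}=X_{3}$ (giving $\mcC\(sl\(2\),8\)_{ad}$ and its conjugates). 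Collecting the non-empty cases produces precisely the list of item 2.

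The point requiring care, and the place where I expect the main obstacle, is not any single deduction but the bookkeeping needed to confirm that the stratification is genuinely clean. I would need to check that the four cases of \propref{FourCases} together with the three-way grading refinement are exhaustive and non-overlapping, that the Grothendieck-equivalence classes named in distinct subcases are actually distinct so that no category is listed twice and none omitted, and that each displayed premodular datum is in fact realized by the quantum-group category attached to it. Since every such realizability and distinctness claim has already been secured in the preceding propositions, the final assembly is routine; the only real work is the verification that the two stratifications used above cover every non-pointed properly premodular possibility and that the resulting catalogue is exactly the one asserted.
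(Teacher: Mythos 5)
Your proposal is correct and matches the paper's approach exactly: the paper states only that ``the results of this section can be compiled to give the following theorem,'' and your assembly — the symmetric/properly premodular/modular trichotomy from the M\"{u}ger center, the symmetric case via \cite{Deligne}, the modular case via \propref{Rank4Modular}, and the properly premodular case via \propref{FourCases} together with the grading dichotomy and the case-by-case propositions — is precisely that compilation. Your closing remarks on exhaustiveness and non-overlap of the stratification are the right things to check and are indeed what the preceding propositions secure.
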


\section*{Acknowledgments}
  I'd like to thank Eric Rowell for very useful discussions, guidance, and
  inspiration. I'd also like to thank C\/{e}sar Galindo, Tobias Hagge, David
  Jordan, Siu-Hung Ng, Julia Plavnik, Matthew Titsworth, and Zhenghan Wang for 
  enlightening discussions.

\clearpage
\pagestyle{plain}

\bibliography{References}
\bibliographystyle{plain}

\end{document}